\documentclass[10pt,a4paper]{article}
\usepackage{amsmath,amsxtra,amssymb,latexsym,amscd,amsfonts,multicol,enumerate,ifthen,stmaryrd,indentfirst,amsthm,amstext}
\usepackage[mathscr]{eucal}
\usepackage{graphics, graphpap}
\usepackage[pdftex]{graphicx}
\usepackage{xspace}
\usepackage{array, tabularx, longtable}
\usepackage{multicol,color}
\usepackage{mathrsfs}
\usepackage{makeidx}
\usepackage{indentfirst}
\usepackage{subfigure}

\usepackage{hyperref}
\usepackage[T1]{fontenc}
\voffset =0in 
\hoffset =0in

\setlength{\evensidemargin}{1cm}  
\setlength{\oddsidemargin}{0.59cm}     
\setlength{\textwidth}{16cm}            
\setlength{\topmargin}{0cm}          
\setlength{\headheight}{0cm}             
\setlength{\headsep}{1,5cm}                
\setlength{\textheight}{23.0cm}             
\setlength{\baselineskip}{18truept}

\parskip 3pt
\abovedisplayskip =6pt plus 6pt minus 9pt
\abovedisplayshortskip =0pt plus 6pt minus 9pt

\belowdisplayshortskip =\abovedisplayshortskip
\def\NoBlackBoxes{\overfullrule=0pt }
\NoBlackBoxes
\theoremstyle{plain}
\newtheorem{theorem}{Theorem}
\newtheorem{lemma}{Lemma}

\newtheorem{corollary}{Corollary}
\newcommand\R{{\mathbb R}}
\newcommand\E{{\textnormal E}}
\newcommand\PP{{\textnormal P}}

\begin{document}
\begin{center}

\textbf{ON THE RATE OF CONVERGENCE FOR CENTRAL LIMIT THEOREMS OF SOJOURN TIMES OF GAUSSIAN FIELDS}

\end{center}
\begin{center}

VIET-HUNG PHAM
\\Institut de math\'{e}matiques de Toulouse \\ Universit\'{e} Paul Sabatier (Toulouse III) \\ 118, route de Narbonne \\ 31062 TOULOUSE Cedex 09\\FRANCE\\
pgviethung@gmail.com
\end{center}
\abstract {The aim of this paper is to control the rate of convergence for central limit theorems of sojourn times of Gaussian fields in both cases: the fixed and the moving level. Our main tools are the Malliavin calculus and the Stein's method, developed by Nualart, Peccati and Nourdin. We also extend some results of Berman to the multidimensional case.}\\
\textbf{Key words:}  Gaussian field, sojourn time, Malliavin calculus, Hermite polynomials.
\section{Introduction}
Let $X=\{X(t), t\in \R^d\}$ be a stationary centered Gaussian field and $T$ be a measurable subset of $\R^d$. The sojourn time (or the volume of the excursion set) of $X$ above the level $u_T$ in $T$ is defined as
$$\int_T \mathbb{I}(X(t)\geq u_T)dt.$$
\indent The origin of this subject is the intersection between the study of the geometric properties of random surfaces and the one of the non-linear functionals of Gaussian fields. Moreover, it has many applications in statistics of random processes (see, for example, Spodarev and Timmermann\cite{6}). \\
\indent The case of a fixed level: $u_T=u=const$ has been addressed in dimension 1 by of Sun \cite{19}, Chambers and Slud \cite{7}, Major \cite{11}, and Giraitis and Surgailis \cite{8}. Later on, some multidimensional versions were proved by Breuer and Major \cite{4}, Arcones \cite{1}, Ivanov and Leonenko \cite{9} and Bulinski, Spodarev and Timmermann \cite{6}. Their works are based on the following assumption
\begin{itemize}
 \item[(A).] $\{X(t): \, t\in \R^d \}$ is a stationary centered Gaussian field with unit variance  and covariance function $\rho(t)$ such that $$\int_{\R^d}|\rho(t)|dt < \infty,$$
 \end{itemize}
and can be presented in the following statement.
\begin{theorem} \label{th1}
Let $\{X(t): \, t\in \R^d \}$ be a random field satisfying the condition (A). For a fixed real-valued $u$, define the sojourn time as
\begin{equation}\label{st}
\displaystyle S_T=\int_{[0,T]^d} \mathbb{I}(X(t)\geq u)dt.
\end{equation}
Then, as T tends to infinity, 
$$\frac{S_T-T^d\overline{\Phi}(u)}{\sqrt{T^d}} \xrightarrow{d} \mathcal{N}(0,\sigma^2),$$
where 
\begin{equation}\label{sig}
\displaystyle 0< \sigma^2=\sum_{n=1}^{\infty} \frac{\varphi^2(u)H_{n-1}^2(u)}{n!} \int_{\R^d}\rho^n(t)dt < \infty,
\end{equation}
$\varphi$ is the density function of the standard Gaussian law and $\overline{\Phi}$ is the tail of its distribution.
\end{theorem}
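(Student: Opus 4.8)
\medskip
\noindent\emph{Proof strategy.}
The plan is to expand the indicator in Hermite polynomials, identify each term as a fixed-order Wiener chaos, and then invoke the Breuer--Major theorem via the fourth moment theorem of Nualart and Peccati together with its multivariate (Peccati--Tudor) version. First I would expand $\mathbb{I}(\cdot\ge u)\in L^2(\R,\varphi(x)\,dx)$ in the orthogonal basis of Hermite polynomials $H_n$ normalised by $\int_\R H_n^2\varphi=n!$,
\[
\mathbb{I}(x\ge u)=\overline{\Phi}(u)+\sum_{n\ge1}c_nH_n(x),\qquad c_n=\frac{1}{n!}\int_u^\infty H_n(x)\varphi(x)\,dx=\frac{\varphi(u)H_{n-1}(u)}{n!},
\]
where the coefficients follow from $H_n\varphi=(-1)^n\varphi^{(n)}$, and Parseval gives $\sum_{n\ge1}c_n^2n!=\overline{\Phi}(u)\bigl(1-\overline{\Phi}(u)\bigr)<\infty$. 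Substituting into \eqref{st} and normalising turns the assertion into a statement about the series $\frac{S_T-T^d\overline{\Phi}(u)}{\sqrt{T^d}}=\sum_{n\ge1}c_n\,Z_{n,T}$, where $Z_{n,T}:=T^{-d/2}\int_{[0,T]^d}H_n(X(t))\,dt$.

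Next I would represent $X(t)=W(e_t)$ for an isonormal Gaussian process $W$ over a real Hilbert space $\mathfrak{H}$ with $\|e_t\|_{\mathfrak{H}}=1$ and $\langle e_s,e_t\rangle_{\mathfrak{H}}=\rho(s-t)$, so that $H_n(X(t))=I_n(e_t^{\otimes n})$ and $Z_{n,T}=I_n(g_{n,T})$ lies in the $n$-th Wiener chaos, with $g_{n,T}=T^{-d/2}\int_{[0,T]^d}e_t^{\otimes n}\,dt$. A direct computation then gives, by dominated convergence (using $|\rho|\le1$, hence $|\rho|^n\le|\rho|$, and (A)),
\[
\E\bigl[Z_{n,T}^2\bigr]=n!\,\|g_{n,T}\|^2=n!\int_{[-T,T]^d}\rho^n(v)\prod_{i=1}^d\bigl(1-\tfrac{|v_i|}{T}\bigr)\,dv\;\longrightarrow\;n!\int_{\R^d}\rho^n(v)\,dv=:\sigma_n^2
\]
as $T\to\infty$, together with the uniform bound $\E[Z_{n,T}^2]\le n!\|\rho\|_{L^1(\R^d)}$. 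Orthogonality of distinct chaoses gives $\E[Z_{n,T}Z_{m,T}]=0$ for $n\ne m$, and $\sigma^2:=\sum_{n\ge1}c_n^2\sigma_n^2\le\|\rho\|_{L^1}\sum_{n\ge1}c_n^2n!<\infty$, which already identifies the limiting variance.

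Then, for each fixed $n$ with $\sigma_n^2>0$, I would apply the Nualart--Peccati fourth moment theorem: it reduces $Z_{n,T}\xrightarrow{d}\mathcal{N}(0,\sigma_n^2)$ to showing $\|g_{n,T}\otimes_rg_{n,T}\|\to0$ for $r=1,\dots,n-1$ (the case $\sigma_n^2=0$ being trivial, since then $Z_{n,T}\to0$ in $L^2$). Unwinding inner products, $\|g_{n,T}\otimes_rg_{n,T}\|^2$ equals $T^{-2d}$ times a ``cyclic'' integral over $([0,T]^d)^4$ of $\rho^r(s_1-t_1)\,\rho^{\,n-r}(s_1-s_2)\,\rho^r(s_2-t_2)\,\rho^{\,n-r}(t_1-t_2)$; translating so that $s_1$ is a free variable (a factor $T^d$) and then integrating $t_1,t_2,s_2$ successively by H\"older's inequality, bounding every factor $|\rho|^k\le|\rho|$ and using $\rho\in L^1(\R^d)$, this is $\le C_nT^{-d}\to0$. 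Together with the convergence of covariances $\E[Z_{n,T}Z_{m,T}]\to\sigma_n^2\delta_{nm}$, the Peccati--Tudor theorem upgrades this to joint convergence of $(Z_{1,T},\dots,Z_{N,T})$ to a centred Gaussian vector with diagonal covariance, so that for every fixed $N$ one has $\sum_{n=1}^{N}c_nZ_{n,T}\xrightarrow{d}\mathcal{N}(0,\sigma_{(N)}^2)$ as $T\to\infty$, with $\sigma_{(N)}^2:=\sum_{n=1}^{N}c_n^2\sigma_n^2\to\sigma^2$.

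Finally I would control the tail $R_{N,T}:=\sum_{n>N}c_nZ_{n,T}$: by orthogonality and the uniform variance bound, $\E[R_{N,T}^2]\le\|\rho\|_{L^1}\sum_{n>N}c_n^2n!\to0$ as $N\to\infty$, uniformly in $T$. The standard approximation lemma for convergence in distribution (if $W_{N,T}\xrightarrow{d}\mu_N$ as $T\to\infty$, $\mu_N\Rightarrow\mu$, and $\lim_N\limsup_T\E[(W_T-W_{N,T})^2]=0$, then $W_T\xrightarrow{d}\mu$) then yields $\frac{S_T-T^d\overline{\Phi}(u)}{\sqrt{T^d}}\xrightarrow{d}\mathcal{N}(0,\sigma^2)$. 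To see $\sigma^2>0$, note that $\rho$ is positive-definite and integrable with $\rho(0)=1$, hence not a.e.\ zero, so $\int_{\R^d}\rho^2=\|\rho\|_{L^2}^2>0$; since $H_1(u)=u$, the $n=2$ summand of $\sigma^2$ is then strictly positive whenever $u\ne0$, while for $u=0$ one argues the same way with an odd-order summand (for which $H_{n-1}(0)\ne0$), using that $\int_{\R^d}\rho^n\ge0$ for every $n$ because $\rho^n$ is again a positive-definite integrable function. I expect the contraction/fourth-moment estimate of the third step---the genuine Breuer--Major content, cf.\ the works of Breuer--Major and Arcones---to be the main obstacle, the bookkeeping needed to keep all the bounds uniform in $T$ being the part most prone to slips; strict positivity of $\sigma^2$ in the boundary case $u=0$ is a secondary but somewhat delicate point.
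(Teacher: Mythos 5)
The paper does not actually prove Theorem \ref{th1}: it is quoted as a known result (Breuer--Major, Arcones, Ivanov--Leonenko, Bulinski--Spodarev--Timmermann), so there is no in-paper argument to match yours against. Your route --- Hermite expansion of the indicator with $c_n=\varphi(u)H_{n-1}(u)/n!$, identification of $Z_{n,T}$ with an $n$-th chaos integral $I_n(g_{n,T})$, the fourth moment theorem plus Peccati--Tudor for the joint convergence of finitely many chaoses, and an $L^2$ tail bound uniform in $T$ to remove the truncation --- is the standard modern proof and is essentially sound. It is also closely aligned with what the paper actually does for its quantitative Theorem \ref{th2}: the same chaos decomposition and truncation $S_{T,N_T}$ appear there, and your contraction estimate $\|g_{n,T}\otimes_r g_{n,T}\|^2\leq C_n T^{-d}$ is exactly the computation carried out inside the paper's Lemma \ref{lem} (bounding the fourfold integral of $|\rho(t-s)\rho(t'-s')\rho(t-t')\rho(s-s')|$ by $T^d(\int_{\R^d}|\rho|)^3$ after a change of variables). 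The difference is only that the paper replaces the qualitative fourth moment theorem by the Malliavin--Stein bound of Nourdin--Peccati so as to extract a rate, at the price of the extra hypothesis (\ref{dk}); your argument needs only (A) and yields the limit theorem without a rate.

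One genuine soft spot: strict positivity of $\sigma^2$ in the case $u=0$. Your argument for $u\neq 0$ via the $n=2$ term ($\int_{\R^d}\rho^2=\|\rho\|_{L^2}^2>0$, $H_1(u)^2=u^2>0$, all other summands nonnegative) is complete. For $u=0$ only odd $n$ contribute, and it is not enough to say that some $H_{n-1}(0)\neq 0$ and $\int\rho^n\geq 0$: one needs a specific odd $n$ with $\int_{\R^d}\rho^n>0$ strictly, and both $\int\rho$ and $\int\rho^3$ can vanish (e.g.\ a spectral density supported on $\pm[1,2]$ in $d=1$). The claim is still true --- writing $\int\rho^n=(2\pi)^d f^{*n}(0)$ with $f$ the (even, nonnegative) spectral density, one checks that $f^{*2}$ is bounded below on a ball around the origin, hence $f^{*(2m+1)}(0)\geq c_m\int_{B(0,m\epsilon)}f>0$ for $m$ large --- but this step needs to be supplied; as written your sketch does not establish it.
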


\indent Berman \cite{3} considered the problem for a Gaussian process in the case when the level depends on $T$. When the covariance function is not integrable, he assumed that the main component of the sojourn time is the first chaos in the Wiener chaos expansion. Else, his arguments were based on the spectral representation
$$\rho(t)=\int_{R}b(t+s)b(s)ds,$$
with the mixing condition $b\in L^1\cap L^2$ and the $m$-dependent method. More precisely, he approximated the function $b$ by a sequence of functions with compact support obtaining a family of $m$-dependent processes converges to the original one, and then he could use the central limit theorems that had been proved for this kind of process. His method can be applied in the multivariate case.\\
\indent However, the above works do not give us much information about the rate of convergence for the central limit theorems. Then, in this paper, we aim to control the speed in both cases: the fixed and the moving level. Our approaches come from the recent techniques, developed by Nualart, Peccati and Nourdin (\cite{12},\cite{13},\cite{17}, etc.), that are the combination between the Malliavin calculus and the Stein's method.  Here, we consider the Wasserstein distance for two integrable variables
$$d(X,Y)=\underset{h\in \textnormal{Lip(1)}}{\sup}|\E(h(X))-\E(h(Y))|,$$
where Lip(1) is the collection of all Lipschitz functions with Lipschitz constant $\leq 1$. Our main results are the following:
\begin{theorem}[\textbf{Fixed level}] \label{th2}
Let $\{X(t): \, t\in \R^d \}$ be a random field satisfying the condition (A). Assume that the covariance function $\rho$ satisfies
\begin{equation}\label{dk}
\displaystyle \int_{\R^d\setminus[-a,a]^d}|\rho(t)|dt \leq (const)(\log a)^{-1}, \; \textnormal{for} \; a \rightarrow \infty.
\end{equation}
Let $S_T$ be defined by (\ref{st}). Then,
$$\displaystyle d\left(\frac{S_T-\E(S_T)}{\sqrt{T^d}}, \mathcal{N}(0,\sigma^2)\right) \leq C (\log T)^{-1/4},$$
where $C$ is a constant depending on the field and the level, and $\sigma^2$ satisfies (\ref{sig}).
\end{theorem}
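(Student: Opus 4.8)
The plan is to expand the sojourn time in its Wiener chaos series, truncate it at a level $N$, estimate the truncated part by the Malliavin--Stein bound of Nourdin and Peccati and bound the discarded tail in $L^2$, and finally optimise $N$ against $T$.

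\emph{Step 1 (chaos expansion and a priori bounds).} Represent $X$ through an isonormal Gaussian process over a Hilbert space $\mathfrak{H}$, so that $X(t)=I_1(e_t)$ with $\langle e_s,e_t\rangle_{\mathfrak{H}}=\rho(s-t)$, $I_n$ the multiple Wiener--It\^o integral of order $n$. Expanding the indicator in Hermite polynomials, $\mathbb{I}(x\ge u)-\overline{\Phi}(u)=\sum_{n\ge1}\frac{\varphi(u)H_{n-1}(u)}{n!}H_n(x)$ (so the Hermite rank is one), and using $H_n(X(t))=I_n(e_t^{\otimes n})$,
$$F_T:=\frac{S_T-\E(S_T)}{\sqrt{T^d}}=\sum_{n\ge1}I_n(f_{n,T}),\qquad f_{n,T}:=\frac{\varphi(u)H_{n-1}(u)}{n!\sqrt{T^d}}\int_{[0,T]^d}e_t^{\otimes n}\,dt .$$
A direct computation gives $n!\,\|f_{n,T}\|_{\mathfrak{H}^{\otimes n}}^2=\frac{\varphi^2(u)H_{n-1}^2(u)}{n!}\cdot\frac1{T^d}\int_{[0,T]^{2d}}\rho^n(s-t)\,ds\,dt$, which is $\le\frac{\varphi^2(u)H_{n-1}^2(u)}{n!}\|\rho\|_{L^1}$ since $|\rho|\le1$, and converges to $\sigma_n^2=\frac{\varphi^2(u)H_{n-1}^2(u)}{n!}\int_{\R^d}\rho^n(t)\,dt$. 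Classical asymptotics for Hermite polynomials (in the spirit of Berman's analysis) give $H_{n-1}^2(u)/(n-1)!=O(n^{-1/2})$ for fixed $u$, hence, uniformly in $T$,
$$\sum_{n>N}n!\,\|f_{n,T}\|^2\le\|\rho\|_{L^1}\,\tau_N,\qquad \tau_N:=\sum_{n>N}\frac{\varphi^2(u)H_{n-1}^2(u)}{n!}=O(N^{-1/2}),$$
and in particular $\kappa:=\sum_{n\ge1}\varphi^2(u)H_{n-1}^2(u)/n!<\infty$.

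\emph{Step 2 (truncation and the two elementary errors).} Fix $N$, put $F_T^{\le N}=\sum_{n=1}^N I_n(f_{n,T})$, $R_T=F_T-F_T^{\le N}$, $\sigma_{T,N}^2=\E[(F_T^{\le N})^2]$. For $h\in\mathrm{Lip}(1)$,
$$|\E h(F_T)-\E h(\mathcal{N}(0,\sigma^2))|\le \E|R_T|+d\big(F_T^{\le N},\mathcal{N}(0,\sigma_{T,N}^2)\big)+d\big(\mathcal{N}(0,\sigma_{T,N}^2),\mathcal{N}(0,\sigma^2)\big).$$
Here $\E|R_T|\le(\E[R_T^2])^{1/2}=(\|\rho\|_{L^1}\tau_N)^{1/2}$ by Step 1. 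Coupling two centred Gaussians through a common $\mathcal{N}(0,1)$ variable gives $d(\mathcal{N}(0,a),\mathcal{N}(0,b))=\sqrt{2/\pi}\,|\sqrt a-\sqrt b|\le C|a-b|$ once $T$ is large (since $\sigma_{T,N}^2\to\sigma^2>0$, $\sigma^2>0$ being part of Theorem~\ref{th1}); and from $\sigma_{T,N}^2-\sigma^2=\sum_{n\le N}(n!\|f_{n,T}\|^2-\sigma_n^2)-\sum_{n>N}\sigma_n^2$, the bound $|\sum_{n>N}\sigma_n^2|\le\|\rho\|_{L^1}\tau_N$, and the standard reduction (change of variables producing the triangular weights $\prod_i(1-|t_i|/T)$, splitting the cube at scale $\sqrt T$, using $|\rho|^n\le|\rho|$ and (\ref{dk}))
$$\Big|\tfrac1{T^d}\int_{[0,T]^{2d}}\rho^n(s-t)\,ds\,dt-\int_{\R^d}\rho^n(t)\,dt\Big|\le\int_{\R^d\setminus[-\sqrt T,\sqrt T]^d}|\rho|+\frac{d\,\|\rho\|_{L^1}}{\sqrt T}+\int_{\R^d\setminus[-T,T]^d}|\rho|\le C(\log T)^{-1}$$
(uniformly in $n\ge1$), summing against $\varphi^2(u)H_{n-1}^2(u)/n!$ over $n\le N$ and using $\kappa<\infty$ yields $|\sigma_{T,N}^2-\sigma^2|\le C\kappa(\log T)^{-1}+\|\rho\|_{L^1}\tau_N$.

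\emph{Step 3 (Malliavin--Stein for the truncated sum, and optimisation).} Since $\E\langle DF_T^{\le N},-DL^{-1}F_T^{\le N}\rangle_{\mathfrak{H}}=\sigma_{T,N}^2$, the Nourdin--Peccati bound gives
$$d\big(F_T^{\le N},\mathcal{N}(0,\sigma_{T,N}^2)\big)\le\frac{C}{\sigma_{T,N}}\,\E\big|\sigma_{T,N}^2-\langle DF_T^{\le N},-DL^{-1}F_T^{\le N}\rangle_{\mathfrak{H}}\big|\le\frac{C}{\sigma_{T,N}}\Big(\mathrm{Var}\,\langle DF_T^{\le N},-DL^{-1}F_T^{\le N}\rangle_{\mathfrak{H}}\Big)^{1/2}.$$
Expanding $D$ and $L^{-1}$ chaos by chaos and using the product formula for multiple integrals, the variance is a finite combination (over $n,m\le N$, $1\le r\le\min(n,m)$) of squared contraction norms $\|f_{n,T}\otimes_r f_{m,T}\|^2$ with explicit combinatorial coefficients. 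Each norm is handled by a $4d$-fold change of variables: written out it is an integral over $[0,T]^{4d}$ of a product of four factors of $\rho$, two ``internal'' ($\rho(s_1-t_1)^r$, $\rho(s_2-t_2)^r$) and two ``cross'' ($\rho(s_1-s_2)^{n-r}$, $\rho(t_1-t_2)^{m-r}$); bounding $|\rho|^k\le|\rho|$ and applying convolution (Young/Cauchy--Schwarz) inequalities, available because (A) forces $\rho\in L^1\cap L^2$, gives $\|f_{n,T}\otimes_r f_{m,T}\|^2\le c_n^2 c_m^2\,P(\|\rho\|_{L^1},\|\rho\|_{L^2})\,T^{-d}$ with $c_n=\varphi(u)|H_{n-1}(u)|/n!$. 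Collecting the contributions — whose combinatorial coefficients (and the number of terms) are at most geometric in $N$ — one obtains $d(F_T^{\le N},\mathcal{N}(0,\sigma_{T,N}^2))\le C A^N T^{-d/2}$ for constants $A,C$ depending only on the field and $u$. Combining Steps 2 and 3,
$$d\big(F_T,\mathcal{N}(0,\sigma^2)\big)\le C A^N T^{-d/2}+C(\log T)^{-1}+C\sqrt{\tau_N}\le C A^N T^{-d/2}+C(\log T)^{-1}+C N^{-1/4},$$
and choosing $N=\lfloor\delta\log T\rfloor$ with $0<\delta<d/(2\log A)$ makes the first term $\le T^{\delta\log A-d/2}\to0$ and the last two $\le C(\log T)^{-1/4}$, which is the assertion.

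\emph{Main obstacle.} The delicate point is Step 3. The contraction estimates themselves are a routine but lengthy multidimensional change of variables; what really matters is to keep explicit control of how the combinatorial constants arising from the product formula (and hence the constant $A$) depend on the truncation level $N$. It is precisely the growth of this constant — here no worse than geometric in $N$ — balanced against the uniform tail bound $\tau_N=O(N^{-1/2})$ (from classical Hermite asymptotics) that forces the choice $N\asymp\log T$ and thereby pins the rate at $(\log T)^{-1/4}$; any improvement of the rate would require sharpening either the tail estimate or the Malliavin--Stein estimate.
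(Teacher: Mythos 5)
Your proposal is correct and follows essentially the same route as the paper: truncate the Wiener chaos expansion at level $N$, bound the discarded tail in $L^2$ via the Hermite asymptotics $H_{n-1}^2(u)/n!=O(n^{-3/2})$, apply the Nourdin--Peccati bound to the truncated part with contraction estimates of order $T^{-d/2}$ times a geometrically growing combinatorial factor (the paper's explicit $3^{N}$ is your $A^{N}$), control $|\sigma_{T,N}^2-\sigma^2|$ by splitting the cube at scale $\sqrt{T}$ and invoking condition (\ref{dk}), and choose $N\asymp\log T$. The only (harmless) deviation is your linear bound $d(\mathcal{N}(0,a),\mathcal{N}(0,b))\le C|a-b|$ in place of the paper's square-root bound, which does not affect the final rate $(\log T)^{-1/4}$.
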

Note that the condition (\ref{dk}) is weak, for example if 
$$\rho(t)\cong (const) \|t\|^{-\alpha}, \quad t \rightarrow +\infty $$
for some positive $\alpha >d$ , then it is met. Here and in the following, the notation $f(x)\cong g(x), \; x\rightarrow a$ means that $\displaystyle \underset{x\rightarrow a}{\lim}\frac{f(x)}{g(x)}= 1$.
\begin{theorem}[\textbf{Moving level}] \label{th3}
Let $\{X(t): \, t\in \R^d \}$ be a random field satisfying the condition (A). Suppose that there exists a positive constant $\alpha \in ]0;2]$ such that in a neighborhood of $0$, the covariance function $\rho$ satisfies
$$1-\rho(t) \cong (const)\|t\|^{\alpha} \; \textnormal{for} \;\; t\rightarrow 0.$$
Let $u_T$ be a function that tends to infinity. One defines the sojourn time as
$$S_T=\int_{[0,T]^d} \mathbb{I}(X(t)\geq u_T)dt.$$
Then, for every $\beta \in (0;d/2)$, there exists a constant $C_{\beta}$ depending on the field such that
$$d\left(\frac{S_T-\E(S_T)}{\sqrt{\textnormal{Var}(S_T)}}, \mathcal{N}(0,1)\right)\leq C_{\beta}\left[\sqrt{\frac{u_T^{\frac{2+\alpha}{\alpha}}}{(\log T)^{1/6}}} + \frac{1}{T^{\beta}\varphi(u_T)u_T}\right].$$
\end{theorem}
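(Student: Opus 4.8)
The plan is to apply the Malliavin–Stein method to the chaos expansion of $S_T$, as in the proof of Theorem~\ref{th2}, while tracking the dependence of every constant on the moving level $u_T$. First I would expand the indicator in Hermite polynomials,
$$\mathbb{I}(x\ge u_T)-\overline{\Phi}(u_T)=\sum_{n\ge1}\frac{\varphi(u_T)H_{n-1}(u_T)}{n!}\,H_n(x),$$
so that, writing $e(t)$ for the element of the underlying Gaussian Hilbert space with $\langle e(s),e(t)\rangle=\rho(s-t)$ and $f_n^T=\int_{[0,T]^d}e(t)^{\otimes n}\,dt$, one gets $S_T-\E(S_T)=\sum_{n\ge1}\frac{\varphi(u_T)H_{n-1}(u_T)}{n!}I_n(f_n^T)$ with $\|f_n^T\|^2=\sigma_n^2(T):=\int_{[0,T]^{2d}}\rho^n(s-t)\,ds\,dt$. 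Setting $F_T=(S_T-\E S_T)/\sqrt{\operatorname{Var}(S_T)}$, the Stein–Malliavin inequality reduces the problem to bounding $\operatorname{Var}\langle DF_T,-DL^{-1}F_T\rangle$.

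The decisive preliminary step is a sharp two-sided estimate of the variance. From $\sigma_n^2(T)\le T^d\int_{\mathbb{R}^d}|\rho|^n$ and the local hypothesis $1-\rho(t)\cong (const)\|t\|^\alpha$ — which forces $|\rho(t)|^n\approx e^{-(const)n\|t\|^\alpha}$ near $0$, hence $\int_{\mathbb{R}^d}|\rho|^n\asymp n^{-d/\alpha}$, while restricting the integral defining $\sigma_n^2(T)$ to a small diagonal neighbourhood on which $\rho>0$ gives the matching lower bound $\sigma_n^2(T)\ge (const)\,T^d(1+n)^{-d/\alpha}$ for $T$ large — together with standard estimates on the Hermite polynomials $H_k(u)$ for large $u$ (the relevant weighted sums are governed by the range $n\sim u^2$), I expect $\operatorname{Var}(S_T)\asymp T^d\varphi(u_T)\,u_T^{-(2d+\alpha)/\alpha}$. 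This lower bound is the only place the hypothesis on $1-\rho$ enters, and it is what will eliminate spurious $e^{u_T^2/2}$ factors from the final estimate.

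Next I would truncate the chaos expansion at a level $N=N(T)$, $F_T=F_T^{\le N}+R_T$, and treat the two pieces separately. For the tail, $\|R_T\|_{L^2}^2=\operatorname{Var}(S_T)^{-1}\sum_{n>N}\varphi^2(u_T)H_{n-1}^2(u_T)\sigma_n^2(T)/n!$; feeding in the upper bound on $\sigma_n^2(T)$, the variance lower bound, and the tail asymptotics of $\sum_n H_{n-1}^2(u)/n!$, one gets $\|R_T\|_{L^2}\lesssim u_T^{(2+\alpha)/(2\alpha)}N^{-c}$ for an explicit $c>0$. For $F_T^{\le N}$ one expands $\operatorname{Var}\langle DF_T^{\le N},-DL^{-1}F_T^{\le N}\rangle$ through the product formula into a finite sum of squared contraction norms $\|f_p^T\otimes_r f_q^T\|^2$ with $p,q\le N$ and $1\le r\le p\wedge q$; each is $\le (const)\,T^d$ uniformly (a Breuer–Major-type estimate coming from $\rho\in L^1$ via Young's inequality), and the surviving combinatorial coefficients $\sqrt{(p+q-2r)!}$ and the Hermite factors are handled by the estimates above. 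This yields a bound of the form $(const)\,\Lambda(N)\,u_T^{(2+\alpha)/\alpha}\,T^{-d/2}$, where $\Lambda(N)$ blows up (super-)exponentially in $N$ because the factorials from the product formula do not cancel. One then takes $N$ as large as possible subject to keeping this piece $\le (const)\,T^{-\beta}\varphi(u_T)^{-1}u_T^{-1}$ — exactly the second term of the theorem — which forces $N$ of order only a fractional power of $\log T$; substituting that $N$ into the tail estimate produces the $(\log T)^{-1/6}$ under the square root in the first term. Passing from $\mathcal{N}(0,\operatorname{Var}F_T^{\le N})$ to $\mathcal{N}(0,1)$ costs only another $\|R_T\|_{L^2}^2$ and is absorbed.

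The heart of the difficulty is the moving level itself: since $u_T\to\infty$ there is no bounded "leading chaos" to isolate — the mass sits in chaoses of order $n\sim u_T^2$ — so the entire series must be carried through, the contraction estimates must hold with constants uniform in the chaos order, and the super-exponential combinatorial growth $\Lambda(N)$ caps the affordable truncation level, which is why the rate is merely logarithmic. I expect the genuinely delicate point to be making the sharp variance lower bound survive every step, so that the $u_T$-dependence collapses to the clean powers $u_T^{(2+\alpha)/\alpha}$ and $\varphi(u_T)u_T$; by comparison, the contraction bounds and the Gaussian-to-Gaussian comparison are routine.
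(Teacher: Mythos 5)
Your architecture coincides with the paper's: Hermite/chaos expansion of the indicator, truncation at a level $N_T$, an $L^2$ estimate for the tail measured against a sharp lower bound on $\operatorname{Var}(S_T)$, the Nourdin--Peccati bound for the truncated part (reused verbatim from part ii) of the fixed-level proof), and an optimization over $N_T$. Two of your steps, however, are genuine gaps rather than routine details.

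First, the variance asymptotics. You propose to extract $\operatorname{Var}(S_T)\asymp T^d\varphi(u_T)\,u_T^{-(2d+\alpha)/\alpha}$ from the chaos expansion, combining $\sigma_n^2(T)\asymp T^d n^{-d/\alpha}$ with ``standard Hermite estimates, the relevant range being $n\sim u^2$''. This is precisely the delicate point, and the sketch does not close: the only Hermite input you have in hand is the uniform bound $\max_x e^{-x^2/4}|H_n(x)|\sim C\sqrt{n!}\,n^{-1/12}$, which gives $\varphi(u)H_{n-1}^2(u)/n!\le Cn^{-7/6}$ and hence only $\operatorname{Var}(S_T)\lesssim T^d\varphi(u_T)$ --- the crucial factor $u_T^{-(2d+\alpha)/\alpha}$ is invisible, and no lower bound at all follows from a maximum estimate. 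Producing the two-sided asymptotic by this route would require Plancherel--Rotach asymptotics of $H_n(u)$ uniformly in the transition region $n\sim u^2/2$. The paper avoids the chaos expansion here entirely (Lemmas 3 and 4, proved in the Appendix following Berman): it starts from the exact formula $\operatorname{Var}(S_T)=\int\prod_j(T-|t_j|)\,dt\int_0^{\rho(t)}\varphi(u_T,u_T,y)\,dy$, localizes to a neighborhood of $0$ where $\rho$ is near $1$, and performs the substitutions $z=u^2(1-y)$ and $t=z/u^{2/\alpha}$. You need either to import that argument or to supply the missing Hermite analysis.

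Second, the combinatorial factor $\Lambda(N)$. You assert it grows ``(super-)exponentially because the factorials from the product formula do not cancel''. They must cancel down to a plain exponential for the stated rate to come out: the paper's computation $\sum_{r}\binom{p-1}{r}^2\binom{2p-2-2r}{p-1-r}\le 4^{p-1}(1+1/2)^{2p-2}=9^{p-1}$ (after dividing by $((p-1)!)^2$) shows the Malliavin--Stein term is $\le C\,3^{N_T}T^{-d/2}(\varphi(u_T)u_T)^{-1}$, so that the constraint $3^{N_T}\le T^{d/2-\beta}$ allows $N_T\asymp\log T$ and the tail term becomes $(\log T)^{-1/6}$ inside the square root. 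With genuinely super-exponential growth the affordable $N$ is $o(\log T)$ --- your own phrase ``a fractional power of $\log T$'' concedes this --- and the resulting first term is strictly weaker than $\sqrt{u_T^{(2+\alpha)/\alpha}(\log T)^{-1/6}}$. The cancellation is therefore not a nuisance to be waved at; it is what produces the rate. (A minor point: your variance exponent $(2d+\alpha)/\alpha$ versus the paper's $(2+\alpha)/\alpha$ should be reconciled with the statement of the theorem; the discrepancy does not affect the structure of the argument.)
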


In Nourdin et al \cite{14}, the authors consider a very general case of Theorem \ref{th2} in the discrete time and obtain the bound under the form of an optimization problem. Here, in our particular case, we deal with a continuous time field and give an explicit bound.
\section{Preliminaries}
 In this paper, we use some notations that come from the Malliavin calculus introduced as follows.
 \begin{itemize} 
 \item  \textit{Isonormal Gaussian process} \\
Let $\mathfrak{H}$ be a real separable Hilbert space. Denote by $X= \{ \overline{X}(h): \; h \in \mathfrak{H} \}$ an isonormal Gaussian process over $\mathfrak{H}$, that is a centered Gaussian family, defined on some probability space $( \Omega, \mathcal{F}, \PP )$, and $\E (X(h)X(g)) = \langle h,g \rangle_{\mathfrak{H}}$ for every $h, \, g \in \mathfrak{H}$. We assume that $\mathcal{F}$ is generated by $X$.
\item  \textit{Wiener chaos expansion} \\
The $n$-th Hermite polynomial is
$$H_n(x)=(-1)^ne^{\frac{x^2}{2}} \frac{d^n}{dx^n}e^{\frac{-x^2}{2}}.$$
For every $n \geq 1$, the $n$-th Wiener chaos $\mathcal{H}_n$ is defined as the closed linear subspace of $L^2(\Omega,\mathcal{F},\PP)$ generated by the random variables of the type $H_n(X(h))$, where $h \in \mathfrak{H}$ is such that $\|h\|_{\mathfrak{H}}=1$. Then, every square-integrable random variable $Z \in ( \Omega, \mathcal{F}, \PP )$ has the Wiener chaos expansion
\begin{equation}\label{e1}
\displaystyle Z=\sum_{n=0}^{\infty} J_n(Z),
\end{equation}
where $J_0(Z)=\E(Z)$ and $J_n(Z)$ is the projection of $Z$ on $\mathcal{H}_n$. Besides, for any $n \geq 1$ and $h \in \mathfrak{H}, \; \|h\|_\mathfrak{H}=1$, the application
$$I_n(h^{\otimes n})= H_n(X(h)),$$
can be extended to a linear isometry between the symmetric tensor product $\mathfrak{H}^{\odot n}$ equipped with the norm $\sqrt{n!}\|.\|_{\mathfrak{H}^{\otimes n}}$ and the $n$-th Wiener chaos $\mathcal{H}_n$. So, $Z$ can be also decomposed in the form
$$Z=\sum_{n=0}^{\infty} I_n(f_n),$$
where $I_0(c)=c$ for all real $c$, $f_0=\E(Z)$ and $f_n \in \mathfrak{H}^{\odot n}, \, n\geq 1$, are uniquely determined.
\item \textit{Contraction and multiplication}\\
Let $\{e_k, \; k\geq 1 \}$ be a complete orthonormal system in $\mathfrak{H}$. Given $f \in \mathfrak{H}^{\odot p}$ and $g \in \mathfrak{H}^{\odot q}$, then for every $r=0, 1,\ldots,p\wedge q$, the contraction of $f$ and $g$ of order $r$ is the element of $\mathfrak{H}^{\otimes (p+q-2r)}$ defined by
$$f \otimes _r g= \sum^{\infty}_{i_1,\ldots,i_r=1} \langle f, \, e_{i_1}\otimes \ldots \otimes e_{i_r}\rangle_{\mathfrak{H}^{\otimes r}} \otimes \langle g, \, e_{i_1}\otimes \ldots \otimes e_{i_r}\rangle_{\mathfrak{H}^{\otimes r}}. $$
Then, 
$$I_p(f)I_q(g)= \sum_{r=0}^{p\wedge q} r! \binom{p}{r} \binom{q}{r} I_{p+q-2r}(f\widetilde{\otimes}_r g), $$
where $f\widetilde{\otimes}_r g \in \mathfrak{H}^{\odot (p+q-2r)}$ is the symmetrization of $f \otimes _r g$.
\item \textit{Malliavin derivatives}\\
Let $Z$ be a random variable of the smooth form
$$Z=g(X(h_1),\ldots,X(h_n)),$$
where $n\geq 1$, $g: \R ^n \rightarrow \R$ is an infinitely differentiable function with compact support and $h_i \in \mathfrak{H}$. Then, the Malliavin derivative of $Z$ is the element of $L^2(\Omega,\mathfrak{H})$ defined as
$$DZ=\sum_{i=1}^n \frac{\partial g}{\partial x_i}(X(h_1),\ldots,X(h_n))h_i.$$
\item \textit{Ornstein-Uhlenbeck operators} \\
The operator $L$ is defined as $L=\sum_{n=0}^{\infty}-nJ_n$. The domain of $L$ is
$$Dom L= \{Z\in L^2(\Omega): \; \sum_{n=1}^{\infty} n ^2\|J_n(Z)\|^2_2 < \infty\},$$
where $\|J_n(Z)\|_2=\|J_n(Z)\|_{L^2(\Omega)}$. Define the operator $L^{-1}$, called the pseudo-inverse of $L$, as $\displaystyle L^{-1}(Z)=\sum_{n=1}^{\infty}-\frac{1}{n}J_n(Z)$ for all $Z \in L^2(\Omega)$.
\end{itemize}
\section{The fixed level case}
\begin{lemma}\label{lem} For every $n\geq 2$, let $F_n$ be
$$F_n=\frac{1}{\sqrt{T^d}}\int_{[0,T]^d} H_n(X(t))dt.$$
Then,
$$\textnormal{Var}(\|DF_n\|^2_{\mathfrak{H}})\leq \frac{n^4}{T^d} \sum_{r=0}^{n-2}(r!)^2 \binom{n-1}{r}^4(2n-2-2r)! \left(\int_{\R^d}|\rho(t)|dt\right)^3.$$
\end{lemma}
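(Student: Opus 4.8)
The plan is to place everything in the isonormal Gaussian framework of the Preliminaries. Since $\{X(t)\}$ is stationary, centered, with unit variance and covariance $\rho$, there is a family $\{e_t : t\in\R^d\}\subset\mathfrak{H}$ with $\|e_t\|_{\mathfrak{H}}=1$ and $\langle e_s,e_t\rangle_{\mathfrak{H}}=\rho(s-t)$ such that $X(t)$ is realized as $\overline{X}(e_t)$; hence $H_n(X(t))=I_n(e_t^{\otimes n})$ and $F_n=I_n(f_n)$ with the symmetric kernel
$$f_n=\frac{1}{\sqrt{T^d}}\int_{[0,T]^d}e_t^{\otimes n}\,dt\in\mathfrak{H}^{\odot n}.$$

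First I would compute $\|DF_n\|^2_{\mathfrak{H}}$. Using $DI_n(f_n)=n\,I_{n-1}(f_n)$ (contraction of the last slot against a test element), I expand $\|DF_n\|^2_{\mathfrak{H}}=n^2\sum_k I_{n-1}(\langle f_n,e_k\rangle_{\mathfrak{H}})^2$ over a complete orthonormal system $\{e_k\}$ and apply the multiplication formula; since $\sum_k\langle f_n,e_k\rangle\otimes_r\langle f_n,e_k\rangle=f_n\otimes_{r+1}f_n$, this gives
$$\|DF_n\|^2_{\mathfrak{H}}=n^2\sum_{r=0}^{n-1}r!\binom{n-1}{r}^2 I_{2n-2-2r}\bigl(f_n\widetilde{\otimes}_{r+1}f_n\bigr).$$
The term $r=n-1$ is deterministic and equals $\E(\|DF_n\|^2_{\mathfrak{H}})=n\cdot n!\,\|f_n\|^2_{\mathfrak{H}^{\otimes n}}$. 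Subtracting it, using orthogonality of the Wiener chaoses and $\E[I_m(g)^2]=m!\|\widetilde g\|^2\le m!\|g\|^2$, I obtain
$$\textnormal{Var}(\|DF_n\|^2_{\mathfrak{H}})\le n^4\sum_{r=0}^{n-2}(r!)^2\binom{n-1}{r}^4(2n-2-2r)!\;\|f_n\otimes_{r+1}f_n\|^2_{\mathfrak{H}^{\otimes(2n-2-2r)}}.$$

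It then remains to prove $\|f_n\otimes_{r+1}f_n\|^2_{\mathfrak{H}^{\otimes(2n-2-2r)}}\le T^{-d}\bigl(\int_{\R^d}|\rho(t)|dt\bigr)^3$ for every $0\le r\le n-2$, which substituted above yields the lemma. Writing out the contraction, $f_n\otimes_{r+1}f_n=T^{-d}\int_{[0,T]^{2d}}\rho(s-t)^{r+1}\,e_s^{\otimes(n-1-r)}\otimes e_t^{\otimes(n-1-r)}\,ds\,dt$, so its squared norm is a $4d$-fold integral over $[0,T]^{4d}$ with integrand
$$\rho(s_1-t_1)^{r+1}\rho(s_2-t_2)^{r+1}\rho(s_1-s_2)^{n-1-r}\rho(t_1-t_2)^{n-1-r},$$
a product of four covariance factors forming a $4$-cycle on $\{s_1,t_1,t_2,s_2\}$. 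The crucial observation is that the restriction $0\le r\le n-2$ forces all four exponents to be $\ge 1$, so $|\rho|\le\rho(0)=1$ lets me bound the whole integrand by $|\rho(s_1-t_1)|\,|\rho(t_1-t_2)|\,|\rho(t_2-s_2)|\,|\rho(s_1-s_2)|$. The change of variables $t_2\mapsto t_2$, $x=s_2-t_2$, $y=t_1-t_2$, $z=s_1-t_1$ (Jacobian $1$, with $s_1-s_2=z+y-x$) decouples three factors; integrating $t_2$ over $[0,T]^d$ produces $T^d$, extending the ranges of $x,y,z$ to $\R^d$ and bounding the remaining factor $|\rho(z+y-x)|\le 1$ gives $\le T^d\bigl(\int_{\R^d}|\rho|\bigr)^3$, hence the claimed bound after dividing by $T^{2d}$.

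The main obstacle is the combinatorial bookkeeping in the middle step — turning $\|DF_n\|^2_{\mathfrak{H}}$ into a sum of multiple integrals with the exact constants via the multiplication formula and correctly recognizing the chaos-zero term ($r=n-1$) as the expectation — together with the elementary but essential remark in the last step that $r\le n-2$ makes every exponent in the $4$-cycle at least one, which is precisely what legitimizes the reduction $|\rho|^k\le|\rho|$ on all four edges and produces the clean final bound. Finiteness of the bound uses $\int_{\R^d}|\rho|<\infty$ from condition (A), and $|\rho|\le 1$ uses unit variance.
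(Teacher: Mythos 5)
Your proposal is correct and follows essentially the same route as the paper: expand $\|DF_n\|^2_{\mathfrak{H}}$ into Wiener chaoses via the product formula, identify the $r=n-1$ (chaos-zero) term as the expectation, and bound each remaining contraction norm by a four-fold integral of a $4$-cycle of covariances, which a change of variables controls by $T^d\bigl(\int_{\R^d}|\rho|\bigr)^3$. The only (harmless) difference is that you invoke $\|\widetilde g\|\le\|g\|$ up front and so handle a single unsymmetrized term, whereas the paper keeps the symmetrized tensors and bounds each permutation term $\rho^{n-1-r-i}(t-t')\rho^{n-1-r-i}(s-s')\rho^i(t-s')\rho^i(s-t')$ individually; both yield the same constant.
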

\begin{proof} The Malliavin derivative of $F_n$ is
$$DF_n=\frac{1}{\sqrt{T^d}}\int_{[0,T]^d} n H_{n-1}(X(t)) \psi_t dt,$$
where $\psi_t$ is the element in $\mathfrak{H}$ corresponding to $X(t)$, i.e, $X(t)=\overline{X}(\psi(t))$. And,
$$\|DF_n\|^2_{\mathfrak{H}}=\frac{1}{T^d} n^2 \int_{[0,T]^d\times [0,T]^d} \rho(t-s) H_{n-1}(X(t))H_{n-1}(X(s))dtds.$$
From the Mehler's formula, it is clear that
$$\E[\|DF_n\|^2_{\mathfrak{H}}]=\frac{1}{T^d}n^2 \int_{[0,T]^d\times [0,T]^d} (n-1)!\rho^n(t-s)\,dt\,ds= n\textnormal{Var}(F_n).$$
Using the fact that
$$H_{n-1}(X(t))=I_{n-1}(\psi_t^{\otimes n-1}), $$
and 
\begin{displaymath}
\begin{array}{rl}
I_{n-1}(\psi_t^{\otimes n-1})I_{n-1}(\psi_s^{\otimes n-1}) &=\displaystyle \sum_{r=0}^{n-1} r! \binom{n-1}{r}^2  I_{2n-2-2r}(\psi_t^{\otimes n-1}\widetilde{\otimes}_r \psi_s^{\otimes n-1})\\
&= \displaystyle \sum_{r=0}^{n-1} r! \binom{n-1}{r}^2  \rho^r(t-s) I_{2n-2-2r}(\psi_t^{\otimes n-1-r} \widetilde{\otimes} \psi_s^{\otimes n-1-r}),\\
\end{array}
\end{displaymath}
$\|DF_T\|^2_{\mathfrak{H}}$ can be expressed as
$$\|DF_T\|^2_{\mathfrak{H}}=\frac{1}{T^d} \sum_{r=0}^{n-1} \int_{[0,T]^d\times [0,T]^d} n^2 r! \binom{n-1}{r}^2  \rho^{r+1}(t-s) I_{2n-2-2r}(\psi_t^{\otimes n-1-r} \widetilde{\otimes} \psi_s^{\otimes n-1-r})\, dt\,ds.$$
And, from the orthogonality of the chaos, the variance of $\|DF_T\|^2_{\mathfrak{H}}$ is equal to
\begin{displaymath}
\begin{array}{rl}
 \displaystyle \frac{n^4}{T^{2d}} \sum_{r=0}^{n-2} \int_{[0,T]^{d \times 4}} &(r!)^2 \binom{n-1}{r}^4\rho^{r+1}(t-s) \rho^{r+1}(t'-s')\\
& \displaystyle \times (2n-2-2r)! \langle \psi_t^{\otimes n-1-r} \widetilde{\otimes} \psi_s^{\otimes n-1-r}, \psi_{t'}^{\otimes n-1-r} \widetilde{\otimes} \psi_{s'}^{\otimes n-1-r}\rangle \,dtdsdt'ds'.\\
\end{array}
\end{displaymath}
  Each element of the scalar product has the form
$$\rho^{n-1-r-i}(t-t') \rho^{n-1-r-i}(s-s')\rho^{i}(t-s') \rho^{i}(s-t'),$$
for some $i \in [0; n-1-r]$. And
$$\int_{[0,T]^{d\times 4}} \rho^{r+1}(t-s) \rho^{r+1}(t'-s') \rho^{n-1-r-i}(t-t') \rho^{n-1-r-i}(s-s')\rho^{i}(t-s') \rho^{i}(s-t')\, dtdsdt'ds'$$ is at most equal to
$$\int_{[0,T]^{d\times 4}} |\rho(t-s) \rho(t'-s') \rho(t-t') \rho(s-s')|\, dtdsdt'ds',$$
or
$$\int_{[0,T]^{d\times 4}} |\rho(t-s) \rho(t'-s') \rho(t-s') \rho(s-t')|\, dtdsdt'ds'.$$
With the change of variable $y=(t-s,t'-s',t-t',s')$, $$\int_{[0,T]^{d\times 4}} |\rho(t-s) \rho(t'-s') \rho(t-t') \rho(s-s')|\, dtdsdt'ds'$$ can be written as
$$ \int_{[0,T]^d}dy_4 \int_{A_{y_4}} |\rho(y_1)\rho(y_2)\rho(y_3)\rho(y_2+y_3-y_1)| \, dy_1dy_2dy_3,$$
where $A_{y_4}$ is some domain in $\R^3$ that depends on $y_4$. It is at most equal to
$$\int_{[0,T]^d}dy_4 \int_{\R^d \times \R^d \times \R^d} |\rho(y_1)\rho(y_2)\rho(y_3)| \, dy_1dy_2dy_3=T^d \left(\int_{\R^d}|\rho(t)|dt \right)^3.$$
The same bound is obtained for the others. So, the variance of  $\|DF_T\|^2_{\mathfrak{H}}$ is at most equal to
$$\frac{n^4}{T^d} \sum_{r=0}^{n-2}(r!)^2 \binom{n-1}{r}^4(2n-2-2r)! \left(\int_{\R^d}|\rho(t)|dt \right)^3.$$
\end{proof}
In this paper, we use some facts about Hermite polynomials (see Szeg\"{o} \cite{20}).
\begin{lemma}
\begin{itemize}
\item[$\bullet$] For a fixed point $u$, there exists a constant $C_u$ such that 
\begin{equation}\label{en1}
e^{-u^2/4}|H_n(u)| \leq C_u  (n/e)^{n/2}\; \forall \;n \in \mathbb{N}.
\end{equation}
\item[$\bullet$] There exists a constant $K$ such that, for all $u,\; n$,
\begin{equation}\label{en2}
\frac{\varphi(u) |H_n(u)|}{\sqrt{n!}}<K.
\end{equation}
\item[$\bullet$] As $n$ tends to infinity,
\begin{equation}\label{en3}
\underset{x\in\R}{\max}\; e^{-x^2/4}|H_n(x)| \cong (const) \sqrt{n!}\,n^{-1/12}. 
\end{equation}
\end{itemize}
\end{lemma}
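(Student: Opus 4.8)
The plan is to derive all three estimates by reducing them to classical asymptotics and inequalities for Hermite polynomials, which I would simply quote from Szegő. The first step is to fix normalizations. The $H_n$ used here are the probabilists' Hermite polynomials, related to the physicists' ones $\widehat{H}_n$ (orthogonal for the weight $e^{-x^2}$) by $H_n(x)=2^{-n/2}\widehat{H}_n(x/\sqrt 2)$. Writing $y=x/\sqrt 2$, so that $e^{-x^2/4}=e^{-y^2/2}$, this gives the clean identity $e^{-x^2/4}H_n(x)=2^{-n/2}\,e^{-y^2/2}\widehat{H}_n(y)$. Hence every statement about $e^{-x^2/4}H_n(x)$ is, up to the factor $2^{-n/2}$ and the rescaling $x\mapsto x/\sqrt 2$, a statement about the Hermite function $y\mapsto e^{-y^2/2}\widehat{H}_n(y)$, whose behaviour is classical. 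I would also use Stirling throughout in the form $\sqrt{n!}\cong(2\pi n)^{1/4}(n/e)^{n/2}$.

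For (en2) I would invoke Cramér's inequality, $e^{-y^2/2}|\widehat H_n(y)|\le K'\,2^{n/2}\sqrt{n!}$ for all $y$ and all $n$. Substituting the normalization identity yields $e^{-x^2/4}|H_n(x)|\le K'\sqrt{n!}$ for every $x$, and since $\varphi(u)=(2\pi)^{-1/2}e^{-u^2/2}\le(2\pi)^{-1/2}e^{-u^2/4}$ we get $\varphi(u)|H_n(u)|/\sqrt{n!}\le K'/\sqrt{2\pi}=:K$, which is (en2); this is the easy part. For (en3) I would use the sharp $L^\infty$-bound for Hermite functions, a Plancherel--Rotach result, namely $\max_{y}e^{-y^2/2}|\widehat H_n(y)|\cong c\,2^{n/2}\sqrt{n!}\,n^{-1/12}$, the maximum being attained in the Airy transition zone around the turning point $y\approx\sqrt{2n+1}$. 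Since the rescaling $x\mapsto x/\sqrt2$ does not change the supremum over $\R$, multiplying by $2^{-n/2}$ gives $\max_{x}e^{-x^2/4}|H_n(x)|\cong c\,\sqrt{n!}\,n^{-1/12}$, which is (en3).

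For (en1), $u$ is \emph{fixed}, so as $n\to\infty$ the point $y=u/\sqrt2$ lies deep in the oscillatory region of $\widehat H_n$, far from the turning points $\pm\sqrt{2n+1}$ (indeed $\cos\varphi=y/\sqrt{2n+1}\to0$, i.e. $\varphi\to\pi/2$). There the Plancherel--Rotach asymptotics give $e^{-y^2/2}\widehat H_n(y)=O\!\big(2^{n/2}\sqrt{n!}\,n^{-1/4}\big)$ with the implied constant locally uniform in $y$; specializing to $y=u/\sqrt2$ and multiplying by $2^{-n/2}$ gives $e^{-u^2/4}|H_n(u)|\le C_u'\sqrt{n!}\,n^{-1/4}$ for all large $n$. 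By Stirling, $\sqrt{n!}\,n^{-1/4}\cong(2\pi)^{1/4}(n/e)^{n/2}$, so this reads $e^{-u^2/4}|H_n(u)|\le C_u''(n/e)^{n/2}$ for $n\ge n_0(u)$; enlarging the constant to cover the finitely many $n<n_0(u)$ (using $H_0=1$ and $(n/e)^{n/2}\ge e^{-1/2}$ for $n\ge1$) yields (en1) for all $n\in\mathbb N$.

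The only genuine content is the Plancherel--Rotach machinery behind (en3) and the oscillatory-region estimate used for (en1); I would not reprove it but cite it from Szegő. The main care required is bookkeeping: keeping the two Hermite normalizations apart, tracking the $2^{-n/2}$ factor and the powers $n^{-1/4}$ versus $n^{-1/12}$ through the change of variable, and checking via Stirling that $\sqrt{n!}\,n^{-1/4}$ and $(n/e)^{n/2}$ are comparable --- which is exactly why the fixed-$u$ bound (en1) is sharper than what (en2) alone would give.
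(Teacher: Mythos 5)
Your derivation is correct, and it is essentially the paper's own approach: the paper states these three facts without proof, simply citing Szeg\"{o}, and your argument is exactly the intended reduction to the classical results there (Cram\'{e}r's inequality for (\ref{en2}), the Plancherel--Rotach sup-norm asymptotic $n^{-1/12}$ for (\ref{en3}), and the fixed-argument oscillatory asymptotic of order $n^{-1/4}$ relative to $2^{n/2}\sqrt{n!}$, combined with Stirling, for (\ref{en1})). The normalization bookkeeping between the probabilists' and physicists' conventions is handled correctly, so your write-up in fact supplies details the paper leaves implicit.
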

\begin{proof}[\textbf{Proof of Theorem \ref{th2}}]
It is clear that
\begin{displaymath}
\begin{array}{rl}
 \displaystyle d\left(\frac{S_T-\E(S_T)}{\sqrt{T^d}}, \mathcal{N}(0,\sigma^2)\right) \leq  &d\left(\frac{S_T-\E(S_T)}{\sqrt{T^d}},\frac{S_{T,N_T}-\E(S_{T,N_T})}{\sqrt{T}}\right) \\
&+ d\left(\frac{S_{T,N_T}-\E(S_{T,N_T})}{\sqrt{T^d}},\mathcal{N}(0,\sigma_{N_T}^2) \right)+d\left(\mathcal{N}(0,\sigma_{N_T}^2) ,\mathcal{N}(0,\sigma^2)\right)=d_1+d_2+d_3,\\
\end{array}
\end{displaymath}
where $S_{T,N_T}$ is the truncation of $S_T$ at position $N_T$ in the Wiener chaos expansion. $N_T$ will be chosen later on.
\begin{itemize}
\item[i)]  \textbf{(Bound for $d_1$)} It is easy to show that
\begin{displaymath}
\begin{aligned}
& \displaystyle d\left(\frac{S_T-\E(S_T)}{\sqrt{T^d}},\frac{S_{T,N_T}-\E(S_{T,N_T})}{\sqrt{T^d}}\right)\\
\leq \quad & \displaystyle \left\|\frac{S_T-S_{T,N_T}}{\sqrt{T^d}}\right\| _2 \\
= \quad & \displaystyle \sqrt{\sum_{n=N_T+1}^{\infty}\frac{\varphi^2(u)H_{n-1}^2(u)}{n!T^d} \int_{[-T,T]^d}\rho^n(t)\underset{j=1}{\overset{d}{\Pi}}(T-|t_j|)dt}.
\end{aligned}
\end{displaymath}
Here, from (\ref{en1}) and the Stirling formula
$$n! \sim \sqrt{2\pi n}(n/e)^n,$$
we obtain the bound for $d_1$
\begin{equation}\label{d1}
d_1 \leq \displaystyle C_u \sqrt{\varphi(u)} \sqrt{\int_{\R^d}|\rho(t)|dt}\sqrt{\sum_{n=N_T+1}^{\infty}n^{-(1+\frac{1}{2})}}\leq (const) N_T^{-1/4}.
\end{equation}
\item[ii)] \textbf{(Bound for $d_2$)} From Theorem 3.1 of \cite{12}, it is clear that 

\begin{displaymath}
\begin{array}{rl}
 & \displaystyle d\left(\frac{S_{T,N_T}-\E(S_{T,N_T})}{\sqrt{T^d}},\mathcal{N}(0,\sigma_{N_T}^2) \right)\\
 & \\
 \leq & \displaystyle \left\|\sigma_{N_T}^2-\left\langle D\frac{S_T-\E(S_T)}{\sqrt{T^d}},-DL^{-1}\frac{S_T-\E(S_T)}{\sqrt{T^d}} \right\rangle_{\mathfrak{H}}\right\|_2\\
\leq &  \displaystyle \sum_{p,q=1}^{N_T}\left\|\delta_{pq}\sigma_{T,p}^2-q^{-1}\left\langle DJ_p\left(\frac{S_T-\E(S_T)}{\sqrt{T^d}}\right), DJ_q\left(\frac{S_T-\E(S_T)}{\sqrt{T^d}}\right)\right\rangle_{\mathfrak{H}}\right\|_2,\\
\end{array}
\end{displaymath}
where $J_p$ is the component in the $p$-th chaos defined in (\ref{e1}) and $\sigma_{T,p}^2$ is the variance of $J_p(\frac{S_T-\E(S_T)}{\sqrt{T^d}}).$
\begin{itemize}
\item  If $p=q=1$, $$\displaystyle \left\|\sigma_{T,1}^2-\left\langle DJ_1\left(\frac{S_T-\E(S_T)}{\sqrt{T^d}}\right), DJ_1\left(\frac{S_T-\E(S_T)}{\sqrt{T^d}}\right)\right\rangle_{\mathfrak{H}}\right\|_2=0.$$
\item  If $p=q>1$, 
\begin{displaymath}
\begin{aligned}
& \displaystyle \left\|\sigma_{T,p}^2-p^{-1}\left\langle DJ_p\left(\frac{S_T-\E(S_T)}{\sqrt{T^d}}\right), DJ_p\left(\frac{S_T-\E(S_T)}{\sqrt{T^d}}\right)\right\rangle_{\mathfrak{H}}\right\|_2\\
=& \displaystyle p^{-1}\sqrt{\textnormal{Var}\left(DJ_p\left(\frac{S_T-\E(S_T)}{\sqrt{T^d}}\right)\right)}\\
=& \displaystyle \frac{\varphi^2(u)H^2_{p-1}(u)}{(p!)^2} p^{-1} \sqrt{\textnormal{Var}\left(\left\|D\left(\frac{1}{\sqrt{T^d}} \int_0^T H_p(X(t))dt\right)\right\|^2_{\mathfrak{H}}\right)}.
\end{aligned}
\end{displaymath}
Then, from Lemma \ref{lem}, it is at most equal to
$$\frac{\varphi^2(u)H^2_{p-1}(u)}{(p!)^2}\frac{p}{\sqrt{T^d}}\sqrt{\left(\int_{\R^d}|\rho(t)|dt\right)^3 \left(\sum_{r=0}^{p-2} (r!)^2\binom{p-1}{r}^4(2p-2-2r)!\right)} .$$
\item If $p>1$ and $q=1$, then
\begin{displaymath}
\begin{aligned}
&\displaystyle \left\langle D\left(\frac{1}{\sqrt{T^d}} \int_{[0,T]^d} H_p(X(t))dt\right), D\left(\frac{1}{\sqrt{T^d}} \int_{[0,T]^d} H_1(X(t))dt\right)\right\rangle_{\mathfrak{H}}\\
 =\quad & \displaystyle\frac{p}{T^d} \int_{[0,T]^d \times [0,T]^d} \rho(t-s)H_{p-1}(X(t))dtds.
\end{aligned}
\end{displaymath}
So, its variance is 
$$ \frac{1}{T^{2d}}p^2 \int_{[0,T]^{d\times 4}} (p-1)! \rho(t-s) \rho(t'-s') \rho^{p-1}(t-t')dtdsdt'ds' \leq \frac{p^2(p-1)!}{T^d} \left(\int_{\R^d}|\rho(t)|dt\right)^3.$$
\item If $p, \, q >1$ and $p\neq q$, then 
\begin{displaymath}
\begin{array}{rl}
&\displaystyle \left\langle D\left(\frac{1}{\sqrt{T^d}} \int_0^T H_p(X(t))dt\right), D\left(\frac{1}{\sqrt{T^d}} \int_0^T H_q(X(t))dt\right)\right\rangle_{\mathfrak{H}}\\
=&\displaystyle \frac{pq}{T^d} \int_{[0,T]^d \times [0,T]^d} \rho(t-s)H_{p-1}(X(t))H_{q-1}(X(s))dtds\\
=&\displaystyle \frac{pq}{T^d}  \sum_{r=0}^{p\wedge q -1} \int_{[0,T]^d \times [0,T]^d}r!\binom{p-1}{r}\binom{q-1}{r}\rho(t-s) I_{p+q-2-2r}((\psi_t^{\otimes p-1} \widetilde{\otimes}_r  \psi_s^{\otimes q-1})_s)dtds.
\end{array}
\end{displaymath}
 So, its variance is at most equal to
$$\leq \frac{(pq)^2}{T^d}  \left(\int_{\R^d}|\rho(t)|dt\right)^3\left( \sum_{r=0}^{p\wedge q -1} (r!)^2\binom{p-1}{r}^2\binom{q-1}{r}^2 (p+q-2-2r)!\right).$$
\end{itemize}
We obtain the bound for $d_2$
$$ \sqrt{\left(\int_{\R^d}|\rho(t)|dt\right)^3}\left[ \sum_{p=2}^{N_T} \frac{\varphi^2(u)H^2_{p-1}(u)}{(p!)^2}\frac{p}{\sqrt{T^d}} \sqrt{\sum_{r=0}^{p-2} (r!)^2\binom{p-1}{r}^4(2p-2-2r)!} \right.$$
$$+  \left. \sum_{p,q=1; p\neq q}^{N_T} \left(\frac{1}{p}+\frac{1}{q}\right) \frac{\varphi^2(u)|H_{p-1}(u)H_{q-1}(u)|}{p!q!} \frac{pq}{\sqrt{T^d}}\sqrt{ \sum_{r=0}^{p\wedge q -1} (r!)^2\binom{p-1}{r}^2\binom{q-1}{r}^2 (p+q-2-2r)!} \right].$$
So,
\begin{equation}\label{d2}
\displaystyle d_2 \leq (const)\frac{3^{N_T}}{\sqrt{T^d}}.
\end{equation}
Indeed, from
\begin{displaymath}
\begin{array}{rl}
& \displaystyle \frac{1}{((p-1)!)^2}\sum_{r=0}^{p-2} (r!)^2\binom{p-1}{r}^4(2p-2-2r)!\\
=& \displaystyle \sum_{r=0}^{p-2} \binom{p-1}{r}^2 \binom{2p-2-2r}{p-1-r}\\
\leq & \displaystyle \sum_{r=0}^{p-2} \binom{p-1}{r}^2 2^{2p-2-2r}\\
\leq & \displaystyle 2^{2p-2} \left(\sum_{r=0}^{p-2} \binom{p-1}{r}2^{-r}\right)^2\\
\leq &  2^{2p-2} (1+1/2)^{2p-2}=9^{p-1}, 
\end{array}
\end{displaymath}
and (\ref{en2}), the first term is at most equal to
 $$(const) \frac{1}{\sqrt{T^d}} \sum_{p=2}^{N_T} \frac{3^{p-1}}{p}; $$
and the same for the second term.
\item[iii)] \textbf{(Bound for $d_3$)} It is easy to show that 
\begin{displaymath}
\begin{array}{rl}
d^2(\mathcal{N}(0,\sigma_{N_T}^2) ,\mathcal{N}(0,\sigma^2)) \leq & (const) (\sigma^2-\sigma_{N_T}^2)\\
  =&\left[ \displaystyle \sum_{n=1}^{N_T}\frac{\varphi^2(u)H_{n-1}^2(u)}{n!} \int_{[-T,T]^d}\rho^n(t)\frac{T^d-\underset{j=1}{\overset{d}{\Pi}}(T-|t_j|)}{T^d}dt \right. \\
&+\quad \displaystyle \sum_{n=1}^{N_T}\frac{\varphi^2(u) H_{n-1}^2(u)}{n!} \int_{\R^d\setminus [-T,T]^d}\rho^n(t)dt\\
 &+  \left. \displaystyle \sum_{n=N_T+1}^{\infty}\frac{\varphi^2(u)H_{n-1}^2(u)}{n!} \int_{\R^d}\rho^n(t)dt \right].\\ 
\end{array}
\end{displaymath}
From part i), the third term is at most equal to $(const) N_T^{-1/2}$. 
For the first term, it is equal to
\begin{displaymath}
\begin{array}{rl}
& \displaystyle \sum_{n=1}^{N_T}\frac{\varphi^2(u)H_{n-1}^2(u)}{n!} \int_{[-\sqrt{T},\sqrt{T}]^d}\rho^n(t)\frac{T^d-\underset{j=1}{\overset{d}{\Pi}}(T-|t_j|)}{T^d}dt\\
+& \displaystyle \sum_{n=1}^{N_T}\frac{\varphi^2(u)H_{n-1}^2(u)}{n!} \int_{[-T,T]^d \setminus [-\sqrt{T},\sqrt{T}]^d}\rho^n(t)\frac{T^d-\underset{j=1}{\overset{d}{\Pi}}(T-|t_j|)}{T^d}dt,\\
\end{array}
\end{displaymath}
which is at most equal to
\begin{displaymath}
\begin{array}{rl}
 & \displaystyle \sum_{n=1}^{N_T}\frac{\varphi^2(u)H_{n-1}^2(u)}{n!\sqrt{T}} \int_{[-\sqrt{T},\sqrt{T}]^d}|\rho^n(t)|dt\\
+ & \displaystyle  \sum_{n=1}^{N_T}\frac{\varphi^2(u)H_{n-1}^2(u)}{n!} \int_{[-T,T]^d \setminus [-\sqrt{T},\sqrt{T}]^d}|\rho^n(t)|dt.\\
\end{array}
\end{displaymath}
The first part is at most equal to $\displaystyle \frac{(const)}{\sqrt{T}}$. The sum of the second part and the second term is
$$\sum_{n=1}^{N_T}\frac{\varphi^2(u)H_{n-1}^2(u)}{n!} \int_{\R^d \setminus [-\sqrt{T},\sqrt{T}]^d}|\rho^n(t)|dt,$$
and at most equal to $(const)(\log T)^{-1}$ (from (\ref{dk})). So, 
\begin{equation}\label{d3}
d_3 \leq (const)(N_T^{-1/4}+T^{-1/4}+(\log T)^{-1/2}).
\end{equation}
\end{itemize}
Summing up three bounds (\ref{d1}), (\ref{d2}) and (\ref{d3}), by choosing $N_T=(\log T)/4 $, we have the result.
\end{proof}
\section{The moving level case}
In this section, we assume that the level depends on $T$ and we denote by $u_T$. Then the sojourn time
$$S_T=\int_{[0,T]^d} \mathbb{I}(X(t)\geq u_T)dt$$
has $$\E(S_T)=T^d\overline{\Phi}(u_T) $$ and 
$$\textnormal{Var}(S_T)=\int_{[-T,T]^d}\underset{j=1}{\overset{d}{\Pi}}(T-|t_j|)dt \int_0^{\rho(t)} \varphi(u_T,u_T,y)dy ,$$
where 
$$\varphi(u_T,u_T,y)=\frac{1}{2\pi \sqrt{1-y^2}}\exp\left(\frac{-u_T^2}{1+y}\right)$$ is the density of the bivariate normal vector 
$$\mathcal{N}\left(0, \left[\begin{array}{cc} 1& y\\ y & 1  \end{array}\right] \right).$$
When $u_T$ tends to infinity, 
$$\frac{\textnormal{Var}(S_T)}{T^d} \rightarrow 0,$$
then the Theorem \ref{th1} and \ref{th2} no longer hold. So, at first, we generalize the results of Berman \cite{3} (chapter 8) to estimate the variance of $S_T$ (the detailed proofs are given in the Appendix).
\begin{lemma} If the covariance function $\rho$ satisfies the conditions in Theorem \ref{th3}, then, for every $\epsilon >0$,
$$\int_{[-T,T]^d}\underset{j=1}{\overset{d}{\Pi}}(T-|t_j|)dt \int_0^{\rho(t)} \varphi(u_T,u_T,y)dy  \cong T^d \int_{[-\epsilon,\epsilon]^d} \int_0^{\rho(t)} \varphi(u_T,u_T,y)dy dt,$$
for $T,u_T \rightarrow \infty$.
\end{lemma}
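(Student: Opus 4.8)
The plan is to localize the integral near the origin, where for large $u_T$ the bivariate density $\varphi(u_T,u_T,y)=\tfrac{1}{2\pi\sqrt{1-y^2}}e^{-u_T^2/(1+y)}$ is heavily concentrated on $y$ close to $1$, so that $\int_0^{\rho(t)}\varphi(u_T,u_T,y)\,dy$ is concentrated on the $t$ for which $\rho(t)$ is close to $\rho(0)=1$, i.e.\ near $t=0$. Write $G_T(t):=\int_0^{\rho(t)}\varphi(u_T,u_T,y)\,dy$, so the left side is $\int_{[-T,T]^d}\prod_{j=1}^d(T-|t_j|)\,G_T(t)\,dt$ and the right side is $T^dV_T$ with $V_T:=\int_{[-\epsilon,\epsilon]^d}G_T(t)\,dt$. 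First I would split $\int_{[-T,T]^d}=\int_{[-\epsilon,\epsilon]^d}+\int_{[-T,T]^d\setminus[-\epsilon,\epsilon]^d}$; on the fixed cube one has $0\le T^d-\prod_{j=1}^d(T-|t_j|)\le T^d\bigl(1-(1-\epsilon/T)^d\bigr)\to0$ uniformly, so that piece should be $(1+o(1))T^dV_T$, while the remaining piece should be exponentially small relative to $T^dV_T$.

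Before the estimates I would record the deterministic facts about $\rho$. Being continuous and positive definite with $\rho(0)=1$, $\rho$ satisfies $|\rho(t)-\rho(s)|^2\le 2(1-\rho(t-s))$, hence is uniformly continuous, and a uniformly continuous $L^1$ function (condition (A)) tends to $0$ at infinity; also $-1<\rho(t)<1$ for $t\neq0$, since $\rho(t_0)=\pm1$ would force $X(\cdot+t_0)=\pm X(\cdot)$ a.s., hence $\rho$ periodic and not integrable, contradicting (A). Therefore, for the fixed $\epsilon>0$, $\delta_0:=1-\sup_{\|t\|_\infty\ge\epsilon}|\rho(t)|>0$. Next, $\partial_y\log\varphi(u,u,y)=\frac{y}{1-y^2}+\frac{u^2}{(1+y)^2}$, whose numerator over the positive denominator $(1-y)(1+y)^3$ equals $(1+y)\bigl(y^2+(1-u^2)y+u^2\bigr)$, and an elementary check shows this is positive on $(-1,1)$ once $u>\sqrt3$; so $\varphi(u,u,\cdot)$ is increasing on $(-1,1)$ for $u_T$ large. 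Combined with $e^{-u^2/(1+y)}\le e^{-u^2/2}$ for $y\le1$, this yields $|G_T(t)|\le|\rho(t)|\,\varphi(u_T,u_T,c)$ whenever $|\rho(t)|\le c<1$, and $|G_T(t)|\le\tfrac{|\rho(t)|}{2\pi}e^{-u_T^2}$ whenever $\rho(t)\le0$.

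Now fix $\beta\in(1-\delta_0,1)$ and $\eta\in(0,(1-\beta)/2)$, and set $B_\eta:=\{t:\rho(t)>1-\eta\}$. Since $1-\eta>\beta>1-\delta_0=\sup_{\|t\|_\infty\ge\epsilon}|\rho|$, every point of $B_\eta$ has $\|t\|_\infty<\epsilon$, so $B_\eta\subset[-\epsilon,\epsilon]^d$, and $B_\eta$ is open and contains $0$, hence $|B_\eta|>0$. On $B_\eta$, monotonicity gives $G_T(t)\ge\int_{1-2\eta}^{1-\eta}\varphi(u_T,u_T,y)\,dy\ge\eta\,\varphi(u_T,u_T,1-2\eta)$, so $\int_{B_\eta}G_T\ge c_\eta\,e^{-u_T^2/(2-2\eta)}$ with $c_\eta>0$; on $[-\epsilon,\epsilon]^d\cap\{\rho<0\}$ one has $|G_T(t)|\le\tfrac{|\rho(t)|}{2\pi}e^{-u_T^2}$, and on $[-\epsilon,\epsilon]^d\cap\{0\le\rho\le1-\eta\}$ the integrand is $\ge0$. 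Hence $V_T\ge c_\eta e^{-u_T^2/(2-2\eta)}-\tfrac{1}{2\pi}\bigl(\int_{\R^d}|\rho|\bigr)e^{-u_T^2}\ge\tfrac{c_\eta}{2}e^{-u_T^2/(2-2\eta)}>0$ for $u_T$ large, and likewise $\int_{[-\epsilon,\epsilon]^d}|G_T|\le V_T+\tfrac1\pi\bigl(\int_{\R^d}|\rho|\bigr)e^{-u_T^2}=(1+o(1))V_T$. Then the weight bound of the first paragraph gives $\bigl|\int_{[-\epsilon,\epsilon]^d}\prod_{j=1}^d(T-|t_j|)G_T-T^dV_T\bigr|\le T^d\bigl(1-(1-\epsilon/T)^d\bigr)\int_{[-\epsilon,\epsilon]^d}|G_T|=o(T^dV_T)$ as $T\to\infty$; and since $\|t\|_\infty\ge\epsilon$ and thus $|\rho(t)|\le1-\delta_0$ off the cube, $\bigl|\int_{[-T,T]^d\setminus[-\epsilon,\epsilon]^d}\prod_{j=1}^d(T-|t_j|)G_T\bigr|\le T^d\bigl(\int_{\R^d}|\rho|\bigr)\varphi(u_T,u_T,1-\delta_0)\le C\,T^d e^{-u_T^2/(2-\delta_0)}=o(T^dV_T)$, because $2\eta<1-\beta<\delta_0$ forces $\tfrac{1}{2-\delta_0}>\tfrac{1}{2-2\eta}$. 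Dividing the original identity by $T^dV_T$ and letting $T,u_T\to\infty$ gives the ratio $\to1$, which is the claim.

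The main obstacle is not the probabilistic content — the concentration of $\varphi(u_T,u_T,\cdot)$ near $y=1$ is captured entirely by the monotonicity computation and the elementary bound $e^{-u_T^2/(1+y)}\le e^{-u_T^2/2}$ — but the deterministic bookkeeping: one has to know that $|\rho|$ is bounded strictly away from $1$ outside every neighbourhood of the origin (which uses (A) to rule out periodicity, and uniform continuity together with integrability to get decay at infinity), and then choose $\beta$ and $\eta$ so that the three exponential orders $e^{-u_T^2/(2-2\eta)}$ (a crude lower order for $V_T$), $e^{-u_T^2}$ (the sign-indefinite part near $0$), and $e^{-u_T^2/(2-\delta_0)}$ (the far field) are correctly nested. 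Note that the precise asymptotic $1-\rho(t)\cong(\mathrm{const})\|t\|^\alpha$ of Theorem \ref{th3} is not actually needed here — only $\rho(0)=1$ and continuity are used near the origin — though it is what subsequently makes the finer variance asymptotics explicit.
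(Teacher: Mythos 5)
Your proof is correct and follows essentially the same route as the paper's: both use the strict positivity of $1-\sup_{\|t\|_\infty\ge\epsilon}|\rho|$ (ruled out via non-periodicity and integrability of $\rho$), the monotonicity of $\varphi(u,u,\cdot)$ to bound the far field by $T^d\varphi(u_T,u_T,1-\delta_0)\int|\rho|$, a split of the near-field integral into its positive and negative parts with the negative part of order $e^{-u_T^2}$, and a lower bound of order $e^{-u_T^2/(2-2\eta)}$ coming from a neighbourhood where $\rho$ is close to $1$. Your treatment is somewhat more careful than the paper's on two minor points (the replacement of the weight $\prod_j(T-|t_j|)$ by $T^d$ on the fixed cube, and the verification that $\varphi(u,u,\cdot)$ is increasing), but the argument is the same.
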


So, let $B(u)$ be some function that satisfies
$$B(u) \cong \int_{[-\epsilon,\epsilon]^d} \int_0^{\rho(t)} \varphi(u,u,y)dy dt, \; \textnormal{for} \;\; u \rightarrow  \infty.$$
Then,
$$\textnormal{Var}(S_T) \cong T^d B(u_T),$$
for $T,u_T \rightarrow \infty$.
\begin{lemma}\label{lem1}
If the covariance function $\rho$ satisfies the conditions in Theorem \ref{th3}, then,
 $$B(u)\cong (const) \frac{ \varphi(u)}{u^{\frac{2+\alpha}{\alpha}}}, \; \textnormal{for} \;\; u \rightarrow \infty.$$ 
\end{lemma}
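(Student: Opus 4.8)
The plan is to obtain the asymptotics of $\int_{[-\epsilon,\epsilon]^{d}}\int_{0}^{\rho(t)}\varphi(u,u,y)\,dy\,dt$ as $u\to\infty$ by a Laplace/Watson--type analysis, performed first in the variable $y$ and then in $t$. Since $\rho$ is continuous with $\rho(0)=1$, I fix $\epsilon$ small enough that $\rho(t)>1/2$ on $[-\epsilon,\epsilon]^{d}$; then the inner integral is positive and $1+\rho(t)$ stays close to $2$. The integrand $\varphi(u,u,y)=\frac{1}{2\pi\sqrt{1-y^{2}}}\,e^{-u^{2}/(1+y)}$ is increasing in $y$ on $(0,1)$ and, for $y$ near $1$, grows roughly like $e^{u^{2}y/4}$; hence for large $u$ the mass of $\int_{0}^{\rho(t)}\varphi(u,u,y)\,dy$ sits in an $O(u^{-2})$-neighbourhood of the upper endpoint $y=\rho(t)$, and the whole double integral should be driven by the $t$ near $0$, where $\rho(t)$ is closest to $1$.

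For the inner integral I would substitute $v=1-y$ and use the identity $\frac{u^{2}}{1+y}=\frac{u^{2}}{2}+\frac{u^{2}v}{2(2-v)}$ to factor out $e^{-u^{2}/2}$; then on the relevant small-$v$ range I replace $\sqrt{v(2-v)}$ by $\sqrt{2v}$ and $\frac{v}{2(2-v)}$ by $\tfrac{v}{4}$, and extend the range of $v$ to $(0,\infty)$, the discarded pieces being exponentially small in $u^{2}$. This gives
$$\int_{0}^{\rho(t)}\varphi(u,u,y)\,dy\ \cong\ \frac{e^{-u^{2}/2}}{2\sqrt{2}\,\pi}\int_{1-\rho(t)}^{\infty}v^{-1/2}e^{-u^{2}v/4}\,dv\ =\ \frac{e^{-u^{2}/2}}{\sqrt{2}\,\pi\,u}\,\Gamma\!\Big(\tfrac12,\tfrac{u^{2}(1-\rho(t))}{4}\Big),$$
the last equality by the substitution $w=u^{2}v/4$, with $\Gamma(\cdot,\cdot)$ the upper incomplete Gamma function; Watson's lemma applied at the lower endpoint is what legitimises the $\cong$, uniformly in $t$ once the endpoint $1-\rho(t)$ is absorbed into $\Gamma$.

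It then remains to integrate $\Gamma\big(\tfrac12,u^{2}(1-\rho(t))/4\big)$ over $[-\epsilon,\epsilon]^{d}$, and here I invoke the hypothesis $1-\rho(t)\cong(const)\|t\|^{\alpha}$ at $0$. Since $\Gamma(\tfrac12,x)\to\sqrt{\pi}$ as $x\to0$ while $\Gamma(\tfrac12,x)\le e^{-x}$ for large $x$, the integrand is of order $1$ on $\{\|t\|\lesssim u^{-2/\alpha}\}$ and decays like $\exp(-(const)\,u^{2}\|t\|^{\alpha})$ outside it, so the $t$-integral concentrates near $0$. Rescaling $t=u^{-2/\alpha}s$ turns $\frac{u^{2}(1-\rho(t))}{4}$ into $\frac{(const)\|s\|^{\alpha}}{4}+o(1)$ and produces a fixed negative power of $u$ from the Jacobian, so that
$$\int_{[-\epsilon,\epsilon]^{d}}\Gamma\!\Big(\tfrac12,\tfrac{u^{2}(1-\rho(t))}{4}\Big)\,dt\ \cong\ (const)\,u^{-2d/\alpha},$$
the constant being $\int_{\R^{d}}\Gamma\big(\tfrac12,\tfrac{(const)\|s\|^{\alpha}}{4}\big)\,ds<\infty$ by the Gaussian-type decay of $\Gamma(\tfrac12,\cdot)$. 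Collecting this power together with the factor $u^{-1}$ from the inner step and $\varphi(u)=e^{-u^{2}/2}/\sqrt{2\pi}$ produces $B(u)\cong(const)\,\varphi(u)\,u^{-(2+\alpha)/\alpha}$, with the constant given explicitly by the above $\R^{d}$-integral.

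The main obstacle is upgrading the two concentration statements to genuine asymptotic equivalences. One must split $[-\epsilon,\epsilon]^{d}$ into a slowly shrinking ball about $0$, on which $1-\rho(t)=(const)\|t\|^{\alpha}(1+o(1))$ with $o(1)$ uniform, and its complement, where $\rho(t)$ is bounded away from $1$ and so $\int_{0}^{\rho(t)}\varphi(u,u,y)\,dy$ is exponentially smaller than $e^{-u^{2}/2}$ and contributes negligibly; and one must check that the quantities dropped in the Watson step (the substitution of $\sqrt{v(2-v)}$ and of $\frac{v}{2(2-v)}$, and the extension of the $v$-range) are uniformly $o$ of the main term over the retained ball. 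Once these uniform controls are secured, the two rescalings combine to the claimed power of $u$ and the constant.
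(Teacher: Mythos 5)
Your route is, at bottom, the same computation as the paper's, packaged differently. Since $\Gamma(\tfrac12,x)=2\sqrt{\pi}\,\overline{\Phi}(\sqrt{2x})$, your incomplete-Gamma representation of the inner integral is exactly the paper's intermediate quantity $\frac{\varphi(u)}{u}\int_{[-\epsilon,\epsilon]^d}\overline{\Phi}\bigl(u[\tfrac{1-\rho(t)}{2}]^{1/2}\bigr)dt$. The only organizational difference is that the paper avoids proving a single uniform asymptotic equivalence: it sandwiches $B(u)$ between a $\delta$-dependent upper bound and a matching lower bound (replacing $2-z/u^2$ by $2$ or $2-\delta$ as appropriate), rescales each, and lets $\delta\to 0$ at the end, whereas you propose to control all the Watson-step errors uniformly over a shrinking ball around $0$. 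Both are workable; the $\delta$-sandwich spares the uniformity bookkeeping you correctly identify as the main obstacle, at the cost of needing the auxiliary lower bound $B(u)\geq K(\theta)e^{-u^2\theta/2}$ of Lemma \ref{lem2} to discard the $y\in[0,1-\delta]$ portion of the inner integral.

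However, your final step contains a genuine inconsistency. Your own display gives $\int_{[-\epsilon,\epsilon]^d}\Gamma\bigl(\tfrac12,\tfrac{u^2(1-\rho(t))}{4}\bigr)dt\cong (const)\,u^{-2d/\alpha}$, which is correct: the Jacobian of $t=u^{-2/\alpha}s$ in $\R^d$ is $u^{-2d/\alpha}$. But collecting this with the factor $u^{-1}$ from the inner integral yields $B(u)\cong (const)\,\varphi(u)\,u^{-(2d+\alpha)/\alpha}$, not $u^{-(2+\alpha)/\alpha}$; the two exponents agree only when $d=1$. You cannot simultaneously assert that display and the stated conclusion for $d>1$, so the last sentence of your argument does not follow from the line before it. (For what it is worth, the paper's own proof makes the same slip: after the change of variable $t=z/u^{2/\alpha}$ it records the prefactor as $\varphi(u)/u^{(2+\alpha)/\alpha}$ rather than $\varphi(u)/u^{(2d+\alpha)/\alpha}$, so the exponent in the statement appears to be the one-dimensional value of Berman carried over unchanged.) Either restrict to $d=1$ or correct the exponent to $(2d+\alpha)/\alpha$ and propagate the change; everything else in your argument, including the integrability of $\int_{\R^d}\Gamma\bigl(\tfrac12,\tfrac{(const)\|s\|^{\alpha}}{4}\bigr)ds$ and the negligibility of the discarded regions, is sound.
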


\begin{proof}[\textbf{Proof of Theorem \ref{th3}.}]
The distance between $\displaystyle \frac{S_T-\E(S_T)}{\sqrt{\textnormal{Var}(S_T)}}$ and the standard Gaussian variable is at most equal to 
$$d\left(\frac{S_T-\E(S_T)}{\sqrt{\textnormal{Var}(S_T)}},\frac {S_{T,N_T}-\E(S_{T,N_T})}{\sqrt{\textnormal{Var}(S_{T,N_T})}}\right)+d\left(\frac{S_{T,N_T}-\E(S_{T,N_T})}{\sqrt{\textnormal{Var}(S_{T,N_T})}},\mathcal{N}(0,1)\right),$$
where $S_{T,N_T}$ is the truncate variable of $S_T$ at position $N_T$ in the Wiener chaos expansion. $N_T$ will be chosen later on.
\begin{itemize}
\item The first term is at most equal to (up to some multiplicative constants)
\begin{displaymath}
\begin{aligned}
&\displaystyle \sqrt{\frac{\textnormal{Var}(S_T-S_{N_T})}{\textnormal{Var}(S_T)}}\\
=\quad & (const) \displaystyle \sqrt{\frac{\sum_{n=N_T+1}^{\infty}\frac{\varphi^2(u_T)H_{n-1}^2(u_T)}{n!} \int_{[-T,T]^d} \rho^n(t) \underset{j=1}{\overset{d}{\Pi}}(T-|t_j|)dt}{\textnormal{Var}(S_T)}}\\
\cong \quad & (const) \displaystyle \sqrt{u_T^{\frac{2+\alpha}{\alpha}}\sum_{n=N_T+1}^{\infty}\frac{\varphi(u_T)H_{n-1}^2(u_T)}{n!} \int_{[-T,T]^d} \rho^n(t) \frac{\underset{j=1}{\overset{d}{\Pi}}(T-|t_j|)}{T^d}dt } \\
\leq \quad & \displaystyle (const) \sqrt{u_T^{\frac{2+\alpha}{\alpha}}\int_{\R^d}|\rho(t)|dt \sum_{n=N_T+1}^{\infty} \frac{\varphi(u_T)H_{n-1}^2(u_T)}{n!}}\\
\leq \quad & \displaystyle (const) \sqrt{u_T^{\frac{2+\alpha}{\alpha}}\sum_{n=N_T+1}^{\infty}n^{-(1+\frac{1}{6})}}\quad,
\end{aligned}
\end{displaymath}
where in the third line, we use the approximation
$$\textnormal{Var}(S_T) \cong T^d B(u_T) \cong (const) T^d  \frac{ \varphi(u_T)}{u_T^{\frac{2+\alpha}{\alpha}}},$$
and in the last one, the fact (\ref{en3}) is used. Then, we have the bound
\begin{equation}\label{d4}
\displaystyle (const) \sqrt{\frac{\varphi(u_T) \int_{\R^d}|\rho(t)|dt \sum_{n=N_T+1}^{\infty}n^{-(1+\frac{1}{6})}}{B(u_T)}} \leq (const)
 \sqrt{\frac{u_T^{\frac{2+\alpha}{\alpha}}}{N_T^{1/6}}}.
\end{equation}
 
\item For the second term, as the same argument in part ii) in the proof of Theorem \ref{th2}, we have the bound
$$(const) \frac{3^{N_T}}{\sqrt{T^d}\sqrt{\sum_{n=1}^{N_T}\frac{\varphi^2(u_T)H_{n-1}^2(u_T)}{n!} 2\int_{[-T,T]^d}\rho^n(t)\frac{\underset{j=1}{\overset{d}{\Pi}}(T-|t_j|)}{T^d}dt}},$$
which is at most equal to
\begin{equation}\label{d5}
\displaystyle \frac{3^{N_T}}{T^{d/2}\varphi(u_T)u_T}.
\end{equation}
\end{itemize}
Summing up (\ref{d4}) and (\ref{d5}), by choosing $N_T$ such that $3^{N_T}=T^{-\beta+d/2}$, the result follows.
\end{proof}
We have the following corollary
\begin{corollary}
Let $\{X(t): \, t\in \R^d \}$ be a random field satisfying the condition (A). Suppose that there exists a positive constant $\alpha \in ]0;2]$ such that in a neighborhood of $0$, the covariance function $\rho$ satisfies
$$1-\rho(t) \cong (const)\|t\|^{\alpha} \; \textnormal{for} \;\; t\rightarrow 0.$$
One defines the sojourn time
$$S_T=\int_{[0,T]^d} \mathbb{I}(X(t)\geq u_T)dt.$$
Let $u_T$ be a function that tends to infinity. Then, if $$ (\log T)^{-1/6}u_T^{\frac{2+\alpha}{\alpha}} \rightarrow 0, $$ 
one has
$$\frac{S_T-\E(S_T)}{\sqrt{\textnormal{Var}(S_T)}} \xrightarrow{d} \mathcal{N}(0,1).$$
\end{corollary}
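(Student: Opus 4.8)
\section*{Proof proposal for the Corollary}

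The plan is to read the conclusion straight off Theorem \ref{th3}: convergence in distribution will follow as soon as we know that the Wasserstein distance between the normalized sojourn time and $\mathcal{N}(0,1)$ tends to $0$, since $d$-convergence implies weak convergence. So fix once and for all some $\beta\in(0;d/2)$ and apply Theorem \ref{th3}; it then remains only to check that each of the two terms
$$\sqrt{\frac{u_T^{\frac{2+\alpha}{\alpha}}}{(\log T)^{1/6}}}\qquad\textnormal{and}\qquad \frac{1}{T^{\beta}\varphi(u_T)u_T}$$
goes to $0$ as $T\to\infty$. Note that the normalization is legitimate: by Lemma \ref{lem1} and the discussion preceding it, $\textnormal{Var}(S_T)\cong(const)\,T^{d}\varphi(u_T)u_T^{-\frac{2+\alpha}{\alpha}}>0$.

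The first term is handled by hypothesis with nothing to do: by assumption $(\log T)^{-1/6}u_T^{\frac{2+\alpha}{\alpha}}\to 0$, hence so does its square root.

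The second term is the only place where a short computation is needed. Since $\alpha\in(0;2]$ we have $\frac{2+\alpha}{\alpha}=\frac{2}{\alpha}+1\geq 2$, and since $u_T\to\infty$ we have $u_T\geq 1$ eventually; therefore $u_T^{2}\leq u_T^{\frac{2+\alpha}{\alpha}}=o\big((\log T)^{1/6}\big)$, and in particular $u_T^{2}=o(\log T)$. Writing $\varphi(u_T)=(2\pi)^{-1/2}e^{-u_T^{2}/2}$ we obtain
$$T^{\beta}\varphi(u_T)u_T=\frac{u_T}{\sqrt{2\pi}}\exp\Big(\beta\log T-\tfrac{1}{2}u_T^{2}\Big)\longrightarrow+\infty,$$
because the exponent equals $(\log T)\big(\beta-\tfrac{u_T^{2}}{2\log T}\big)\to+\infty$ (here $\beta>0$ is fixed and $u_T^{2}/\log T\to 0$) while $u_T\to\infty$. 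Hence $\big(T^{\beta}\varphi(u_T)u_T\big)^{-1}\to 0$.

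Combining the two, the bound of Theorem \ref{th3} tends to $0$, so $d\big((S_T-\E(S_T))/\sqrt{\textnormal{Var}(S_T)},\mathcal{N}(0,1)\big)\to 0$, which gives the claimed convergence in distribution. There is essentially no hard part here: the statement is a direct corollary of Theorem \ref{th3}, and the only point worth isolating is the observation that the standing hypothesis automatically forces $u_T^{2}=o(\log T)$, which is exactly what makes the second error term negligible for every admissible $\beta$.
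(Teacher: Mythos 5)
Your proposal is correct and follows exactly the paper's route: apply Theorem \ref{th3} with a fixed $\beta\in(0;d/2)$ and check both error terms vanish. The only difference is that you spell out the step the paper dismisses with ``it is easy to see'' --- namely that $\alpha\le 2$ forces $\frac{2+\alpha}{\alpha}\ge 2$, hence $u_T^2=o(\log T)$ and $T^{\beta}\varphi(u_T)u_T\to\infty$ --- which is a welcome addition rather than a deviation.
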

\begin{proof} Since $ (\log T)^{-1/6}u_T^{\frac{2+\alpha}{\alpha}} \rightarrow 0$, it is easy to see that
$$\frac{1}{T^{\beta}\varphi(u_T)u_T} \rightarrow 0,$$
for all $\beta \in (0;d/2)$ .From Theorem \ref{th3}, the result follows.
\end{proof}
This extends, under the stronger hypothesis on $u_T$, the results of Berman to Gaussian fields in $\R^d$ with $d>1$.\\
\section*{Appendix: Proofs of the Lemmas 3-4}
In this Appendix, we prove the Lemmas 3-4 analogously to the similar ones in \cite{3} with some minor changes.
\begin{proof}[\textbf{Proof of Lemma 3.}]
It suffices to show that
\begin{equation}\label{dg1}
  \frac{\int_{[-T,T]^d \setminus [-\epsilon,\epsilon]^d}\underset{j=1}{\overset{d}{\Pi}}(T-|t_j|) \int_0^{\rho(t)} \phi (u_T,u_T,y) \,dy dt}{\int_{[-\epsilon,\epsilon]^d}\underset{j=1}{\overset{d}{\Pi}}(T-|t_j|) \int_0^{\rho(t)} \phi(u_T,u_T,y)dy dt}
\end{equation}
tends to $0$ for $u_T,T \rightarrow \infty$. 
In fact, denote $$\eta= 1-\max(|\rho(s)| \; : \; s \notin (-\epsilon,\epsilon)^d).$$
If $\eta=0$ then there exists $x\neq 0 $ such that $|\rho(x)|=1$, then the field is $x$- or $2x$- periodic and the integral $\displaystyle \int_{\R^d}|\rho(t)|dt$ can not converge. Therefore, $\eta$ is strictly positive. Since the function $\varphi(u_T,u_T,y)$ is increasing with respect to $y$, the numerator in (\ref{dg1}) is at most equal to
\begin{equation}\label{t1}
\displaystyle T^d\varphi(u_T,u_T,1-\eta)\int_{[-T,T]^d\setminus [-\epsilon,\epsilon]^d}|\rho(t)|dt.
\end{equation}
The denominator in (\ref{dg1}) can be decomposed as
\begin{equation}\label{dg2}
\displaystyle \int_{[-\epsilon,\epsilon]^d}\underset{j=1}{\overset{d}{\Pi}}(T-|t_j|) \int_0^{\rho(t)^+} \varphi(u_T,u_T,y)dy dt-\int_{[-\epsilon,\epsilon]^d}\underset{j=1}{\overset{d}{\Pi}}(T-|t_j|) \int_{-\rho(t)^-}^0 \varphi(u_T,u_T,y)dy dt.
\end{equation}
There exists a positive constant $c <1$, such that $\rho(t)^-\leq c, \; \forall t$, then the second term in (\ref{dg2}) is at most equal to
\begin{equation}\label{t2}
\displaystyle (2\epsilon)^d T^d \frac{1}{\sqrt{1-c^2}}\varphi^2(u_T).
\end{equation}
Choose $\delta < \eta$ and $\epsilon'<\epsilon$ such that
$$\min(\rho(t)\; : \; t \in [-\epsilon',\epsilon']^d) \geq 1-\delta,$$
then the first term in (\ref{dg2}) is lower-bounded by
\begin{displaymath}
\begin{aligned}
& \displaystyle \int_{[-\epsilon',\epsilon']^d}\underset{j=1}{\overset{d}{\Pi}}(T-|t_j|) \int_0^{\rho(t)} \varphi(u_T,u_T,y)dy dt \nonumber \\
\geq \quad &\displaystyle (T-\epsilon')^d\int_{[-\epsilon',\epsilon']^d} \int_{1-\delta}^{\rho(t)} \varphi(u_T,u_T,y)dy dt. \nonumber \\
\end{aligned}
\end{displaymath}
and it has the lower bound
\begin{equation}\label{dg3}
 \displaystyle (T-\epsilon')^d \varphi(u_T,u_T,1-\delta) \int_{[-\epsilon',\epsilon']^d} (\rho(t)-1+\delta)dt.
\end{equation}
It is clear that (\ref{t1}) and (\ref{t2}) are negligible with respect to (\ref{dg3}) when $u_T$ and $T$ tend to infinity. it implies the result.
\end{proof}
To prove the lemma \ref{lem1}, we need the following two results:
\begin{lemma}\label{lem2}
For every $\theta >1$, there exists a constant $K(\theta)>0$, such that, asymptotically
$$B(u) \geq K(\theta)\exp(-u^2\theta/2).$$
\end{lemma}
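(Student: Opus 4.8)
# Proof Proposal for Lemma \ref{lem2}

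The plan is to extract a single favorable term from the integral defining $B(u)$ and estimate it from below, in the spirit of Berman's lower bounds for sojourn times. Recall that, up to asymptotic equivalence,
\[
B(u) \cong \int_{[-\epsilon,\epsilon]^d} \int_0^{\rho(t)} \varphi(u,u,y)\,dy\,dt,
\]
where $\varphi(u,u,y) = \frac{1}{2\pi\sqrt{1-y^2}}\exp\!\left(\frac{-u^2}{1+y}\right)$ is the bivariate Gaussian density evaluated on the diagonal. The key observation is that $\varphi(u,u,y)$ is increasing in $y$ on $[0,1)$, so the inner integral is bounded below by the value of the integrand at any fixed $y_0 \in (0,\rho(t))$ times the length of a subinterval.

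First I would fix $\theta > 1$ and choose $y_0 = y_0(\theta) \in (0,1)$ close enough to $1$ that $\frac{2}{1+y_0} < \theta$; this is possible since $\frac{2}{1+y}\to 1$ as $y\to 1$. Next, using the hypothesis $1-\rho(t) \cong (const)\|t\|^\alpha$ near $0$, choose $\epsilon' \le \epsilon$ small enough that $\rho(t) \ge y_0$ on the cube $[-\epsilon',\epsilon']^d$; call this cube $Q$, which has positive Lebesgue measure $|Q| = (2\epsilon')^d$. Then for $t \in Q$,
\[
\int_0^{\rho(t)} \varphi(u,u,y)\,dy \;\ge\; \int_{y_0}^{\rho(t)} \varphi(u,u,y)\,dy \;\ge\; (\rho(t)-y_0)\,\varphi(u,u,y_0).
\]
Shrinking $\epsilon'$ a touch more so that $\rho(t) - y_0 \ge \frac{1-y_0}{2}$ on $Q$ (again possible by continuity of $\rho$ at $0$), we obtain
\[
B(u) \;\gtrsim\; |Q|\cdot \frac{1-y_0}{2}\cdot \frac{1}{2\pi\sqrt{1-y_0^2}}\exp\!\left(\frac{-u^2}{1+y_0}\right) \;=\; c(\theta)\exp\!\left(\frac{-u^2}{1+y_0}\right),
\]
where $c(\theta) > 0$ depends only on $\theta$ (through $y_0$) and on $\epsilon'$, hence only on the field. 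Since $\frac{1}{1+y_0} < \theta/2$ by our choice of $y_0$, we have $\exp\!\left(\frac{-u^2}{1+y_0}\right) \ge \exp(-u^2\theta/2)$, so $B(u) \ge c(\theta)\exp(-u^2\theta/2)$ asymptotically, which is the claim with $K(\theta) = c(\theta)$.

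I expect the only delicate point to be bookkeeping the asymptotic-equivalence symbol $\cong$ in the definition of $B(u)$: strictly, $B(u)$ is only required to be asymptotically equivalent to the displayed integral as $u \to \infty$, so the final bound is asserted asymptotically rather than for all $u$, which matches the statement. Everything else is elementary — monotonicity of $\varphi(u,u,\cdot)$ in the second argument and continuity of $\rho$ at the origin (which is guaranteed by the local behavior $1-\rho(t)\cong(const)\|t\|^\alpha$). One should also check that $\rho$ is genuinely positive and close to $1$ on a small neighborhood of $0$, but this is immediate from the hypothesis since $\|t\|^\alpha \to 0$.
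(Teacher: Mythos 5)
Your proposal is correct and follows essentially the same route as the paper: the paper's proof simply invokes the lower bound (\ref{dg3}), which is exactly your estimate --- restrict to a smaller cube where $\rho(t)\ge 1-\delta$, use monotonicity of $\varphi(u,u,\cdot)$ in $y$ to bound the inner integral below by a multiple of $\varphi(u,u,1-\delta)\asymp\exp(-u^2/(2-\delta))$, and pick $\delta$ small enough that this dominates $\exp(-u^2\theta/2)$. Your writeup is in fact more explicit than the paper's about the choice of $y_0=1-\delta$ in terms of $\theta$ and about why a suitable cube of positive measure exists.
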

\begin{proof}
It suffices to prove the lemma for $\theta$ in a neighborhood of $1$. In such case, using (\ref{dg3}), we can choose $\delta$ such that
$$\exp(-u^2\theta/2)=\varphi(u,u,1-\delta),$$
and we are done.
\end{proof}
\begin{lemma}\label{lem3}
For every $\delta \in (0,1)$, one has
$$\underset{u\rightarrow \infty}{\lim\sup}\; \frac{B(u)}{2(\frac{2}{2-\delta})^{1/2} \frac{\varphi(u)}{u} \int_{[- \epsilon,\epsilon]^d} \overline{\Phi}\left(u\left[\frac{1-\rho(t)}{2}\right]^{1/2}\right)dt} \leq 1,$$
and 
$$\underset{u\rightarrow \infty}{\lim\inf}\; \frac{B(u)}{[2(2-\delta)]^{1/2} \frac{\varphi(u)}{u} \int_{[- \epsilon,\epsilon]^d} \overline{\Phi}\left(u\left[\frac{1-\rho(t)}{2-\delta}\right]^{1/2}\right)dt}\geq 1.$$
\end{lemma}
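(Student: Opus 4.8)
The plan is to reduce both inequalities to a single elementary one-dimensional estimate, following the same substitution Berman uses. Fix $\delta\in(0,1)$ and choose $\epsilon>0$ so small that $0<\rho(t)$ and $1-\rho(t)<\delta$ for every $t\in[-\epsilon,\epsilon]^{d}$; this is possible because $1-\rho(t)\cong(\textnormal{const})\|t\|^{\alpha}\to 0$ as $t\to 0$. Using the identities $\frac{u^{2}}{1+y}=\frac{u^{2}}{2}+\frac{u^{2}(1-y)}{2(1+y)}$ and $1-y^{2}=(1-y)(1+y)$, together with $e^{-u^{2}/2}=\sqrt{2\pi}\,\varphi(u)$, the change of variable $v=1-y$ gives, for each $t\in[-\epsilon,\epsilon]^{d}$,
$$\int_{0}^{\rho(t)}\varphi(u,u,y)\,dy=\sqrt{2\pi}\,\varphi(u)\int_{1-\rho(t)}^{1}\frac{1}{2\pi\sqrt{v(2-v)}}\exp\Big(-\frac{u^{2}v}{2(2-v)}\Big)\,dv .$$

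First I would split this $v$-integral at $\delta$. On $[\delta,1]$ the integrand is at most $C_{\delta}\exp\big(-\tfrac{u^{2}\delta}{2(2-\delta)}\big)$, so this part contributes at most $(\textnormal{const})\,e^{-u^{2}/(2-\delta)}$ to $\int_{0}^{\rho(t)}\varphi(u,u,y)\,dy$, since $\tfrac12+\tfrac{\delta}{2(2-\delta)}=\tfrac1{2-\delta}$. On the main range $v\in[1-\rho(t),\delta]$, I would sandwich the integrand using $\tfrac14\le\tfrac1{2(2-v)}\le\tfrac1{2(2-\delta)}$ and $\tfrac1{\sqrt2}\le\tfrac1{\sqrt{2-v}}\le\tfrac1{\sqrt{2-\delta}}$, obtaining
$$\frac{1}{2\pi\sqrt2\,\sqrt v}\,e^{-u^{2}v/(2(2-\delta))}\ \le\ \frac{1}{2\pi\sqrt{v(2-v)}}\,e^{-u^{2}v/(2(2-v))}\ \le\ \frac{1}{2\pi\sqrt{2-\delta}\,\sqrt v}\,e^{-u^{2}v/4},$$
and then extend the upper limit from $\delta$ to $+\infty$; for the lower estimate this extension costs one further tail of the same $e^{-u^{2}/(2-\delta)}$ order.

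The last ingredient is the closed form $\int_{a}^{\infty}v^{-1/2}e^{-\lambda v}\,dv=\frac{2\sqrt\pi}{\sqrt\lambda}\,\overline{\Phi}(\sqrt{2\lambda a})$ for $a,\lambda>0$, which follows from $v=w^{2}$ and the definition of $\overline{\Phi}$. Applying it with $\lambda=u^{2}/4$, $a=1-\rho(t)$ converts the upper bound into $2\big(\tfrac{2}{2-\delta}\big)^{1/2}\tfrac{\varphi(u)}{u}\,\overline{\Phi}\big(u[\tfrac{1-\rho(t)}{2}]^{1/2}\big)$, while $\lambda=u^{2}/(2(2-\delta))$, $a=1-\rho(t)$ converts the lower bound into $[2(2-\delta)]^{1/2}\tfrac{\varphi(u)}{u}\,\overline{\Phi}\big(u[\tfrac{1-\rho(t)}{2-\delta}]^{1/2}\big)$; in each case the prefactor $\sqrt{2\pi}\,\varphi(u)$ merges with $\tfrac{2\sqrt\pi}{\sqrt\lambda}$ to give precisely the constants and the arguments of $\overline{\Phi}$ appearing in the statement. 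Integrating these pointwise bounds over $t\in[-\epsilon,\epsilon]^{d}$, recalling $B(u)\cong\int_{[-\epsilon,\epsilon]^{d}}\int_{0}^{\rho(t)}\varphi(u,u,y)\,dy\,dt$, and dividing by the two normalizations yields the claimed $\limsup\le1$ and $\liminf\ge1$ once the error terms are shown to be negligible.

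I expect the genuine work to be that last check: the discarded ranges must be negligible relative to $B(u)$ itself, even though $B(u)$ is exponentially small. All error terms are bounded by a fixed power of $u$ times $e^{-u^{2}/(2-\delta)}$ with $\tfrac1{2-\delta}>\tfrac12$, whereas the main terms are at least $e^{-u^{2}/2}$ divided by a fixed power of $u$: indeed $1-\rho(t)\cong(\textnormal{const})\|t\|^{\alpha}$ forces $\{t\in[-\epsilon,\epsilon]^{d}:1-\rho(t)\le(\textnormal{const})/u^{2}\}$ to have volume of order $u^{-2d/\alpha}$, on which the integrand $\overline{\Phi}(u[\,\cdot\,]^{1/2})$ stays bounded below by $\overline{\Phi}(1)$; equivalently one may simply invoke $B(u)\ge K(\theta)e^{-\theta u^{2}/2}$ from Lemma \ref{lem2} with $\theta$ chosen in $(1,\tfrac{2}{2-\delta})$. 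Hence the ratio error/main tends to $0$ and the two normalized quantities behave as asserted; the remaining steps are routine asymptotics of the Gaussian tail $\overline{\Phi}$.
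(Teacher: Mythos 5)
Your proof is correct and follows essentially the same route as the paper's: discard the range $y\le 1-\delta$ (your $v\ge\delta$) as negligible against $B(u)$ via Lemma \ref{lem2} with $1<\theta<\tfrac{2}{2-\delta}$, sandwich the remaining integrand between its $2-\delta$ and $2$ variants, and convert to Gaussian tails $\overline{\Phi}$ by the substitution $z=u^{2}(1-y)$ (which is your $v=1-y$ composed with $v=w^{2}$), arriving at exactly the stated constants. The only differences are cosmetic bookkeeping of the changes of variable and your optional self-contained volume-of-sublevel-set justification of negligibility, which the paper replaces by citing Lemma \ref{lem2} directly.
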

\begin{proof}
For any $\delta \in (0,1)$, there exists $\epsilon >0$ such that $1-\rho(s) < \delta, \; \forall s \in [-\epsilon,\epsilon]^d$. Then,
$$\int_{[- \epsilon,\epsilon]^d} \int_0^{1-\delta}\varphi(u,u,y)dydt=(2\epsilon)^d\int_0^{1-\delta}\varphi(u,u,y)dy \leq (2\epsilon)^d (1-\delta) \varphi(u,u,1-\delta).$$
Since
$$\varphi(u,u,1-\delta)=\frac{1}{2\pi \sqrt{1-(1-\delta)^2}} \exp\left(\frac{-u^2}{2-\delta}\right),$$
and from Lemma \ref{lem2}, $B(u)$ is asymptotically greater than $K(\theta)\exp(-u^2\theta/2)$ for every $\theta >1$, then by choosing
$$1<\theta< \frac{2}{2-\delta},$$
$\int_{[- \epsilon,\epsilon]^d} \int_0^{1-\delta}\varphi(u,u,y)dydt$ is negligible with respect to $B(u)$ when $u$ tends to infinity. Hence, $B(u)$ is asymptotically equal to
\begin{displaymath}
\begin{aligned}
& \displaystyle \int_{[-\epsilon,\epsilon]^d} \int_{1-\delta}^{\rho(t)} \varphi(u,u,y)dy dt \\
=&\displaystyle \varphi(u) \int_{[-\epsilon,\epsilon]^d} \int_{1-\delta}^{\rho(t)} \frac{1}{\sqrt{1-y^2}} \varphi\left( u\left[\frac{1-y}{1+y}\right]^{1/2}\right)dydt,
\end{aligned}
\end{displaymath}
which is equal to, by the change of variable $z=u^2(1-y)$,
\begin{equation}\label{dg4}
\displaystyle \frac{\varphi(u)}{u} \int_{[-\epsilon,\epsilon]^d} \int_{u^2(1-\rho(t))}^{u^2\delta} \frac{1}{\sqrt{z(2-z/u^2)}} \varphi\left( \left[\frac{z}{2-z/u^2}\right]^{1/2}\right)dzdt.
\end{equation}
An upper bound of (\ref{dg4}) is
$$\frac{1}{\sqrt{2-\delta}}\frac{\varphi(u)}{u} \int_{[-\epsilon,\epsilon]^d} \int_{u^2(1-\rho(t))}^{\infty}\varphi(\sqrt{z/2})\frac{dz}{\sqrt{z}}dt,$$
which is equal to, by the change of variable $x=\sqrt{z/2}$,
\begin{displaymath}
\begin{aligned}
& \displaystyle \frac{2\sqrt{2}}{\sqrt{2-\delta}}\frac{\varphi(u)}{u} \int_{[-\epsilon,\epsilon]^d} \int_{u^2(1-\rho(t))}^{\infty}\varphi(x)dxdt\\
=& \displaystyle \frac{2\sqrt{2}}{\sqrt{2-\delta}}\frac{\varphi(u)}{u} \int_{[-\epsilon,\epsilon]^d} \overline{\Phi}\left(u\left[\frac{1-\rho(t)}{2}\right]^{1/2}\right)dt.
\end{aligned}
\end{displaymath}
A lower bound of (\ref{dg4}) is
$$\frac{\varphi(u)}{u} \int_{[-\epsilon,\epsilon]^d} \int_{u^2(1-\rho(t))}^{u^2\delta}\varphi\left( \left[\frac{z}{2-\delta}\right]^{1/2}\right)\frac{dz}{\sqrt{2z}}dt,$$
which is equal to, by the change of variable $x=\sqrt{z/(2-\delta)}$,
\begin{displaymath}
\begin{aligned}
& \displaystyle \sqrt{2(2-\delta)}\frac{\varphi(u)}{u} \int_{[-\epsilon,\epsilon]^d} \int_{u(\frac{1-\rho(t)}{2-\delta})^{1/2}}^{u(\delta/(2-\delta))^{1/2}}\varphi(x)dxdt\\
=& \displaystyle \sqrt{2(2-\delta)}\frac{\varphi(u)}{u} \int_{[-\epsilon,\epsilon]^d} \left[ \overline{\Phi}\left(u\left[\frac{1-\rho(t)}{2-\delta}\right]^{1/2}\right)-\overline{\Phi}\left(u\left[\frac{\delta}{2-\delta}\right]^{1/2}\right) \right]dt.
\end{aligned}
\end{displaymath}
Since
$$\underset{u\rightarrow \infty}{\lim} \underset{s\in [-\epsilon,\epsilon]^d}{\sup} \frac{\overline{\Phi}\left(u\left[\frac{1-\rho(t)}{2-\delta}\right]^{1/2}\right)}{\overline{\Phi}\left(u\left[\frac{\delta}{2-\delta}\right]^{1/2}\right)}=0,$$
the lower bound is asymptotically equal to
$$[2(2-\delta)]^{1/2} \frac{\varphi(u)}{u} \int_{[- \epsilon,\epsilon]^d} \overline{\Phi}\left(u\left[\frac{1-\rho(t)}{2-\delta}\right]^{1/2}\right)dt.$$
\end{proof}

\begin{proof}[\textbf{Proof of Lemma \ref{lem1}.}]
By change of variable $t=z/u^{2/\alpha}$, the asymptotically upper bound in Lemma \ref{lem3} is equal to
$$2\left(\frac{2}{2-\delta}\right)^{1/2} \frac{\varphi(u)}{u^{\frac{2+\alpha}{\alpha}}} \int_{[- u^{2/\alpha}\epsilon,u^{2/\alpha}\epsilon]^d}\overline{\Phi}\left(u\left[\frac{1-\rho(z/u^{2/\alpha})}{2}\right]^{1/2}\right)dz.$$
It is clear that
$$u^2(1-\rho(z/u^{2/\alpha})) \rightarrow C\|z\|^{\alpha} \; \textnormal{for} \;\; u \rightarrow \infty,$$
then by dominated convergence, this upper bound is asymptotically equal to
$$2\left(\frac{2}{2-\delta}\right)^{1/2} \frac{\varphi(u)}{u^{\frac{2+\alpha}{\alpha}}} \int_{\R^d}\overline{\Phi}(C\|z\|^{\alpha})dz.$$
By the same argument, the lower one in Lemma \ref{lem3} is asymptotically equal to
$$[2(2-\delta)]^{1/2} \frac{\varphi(u)}{u^{\frac{2+\alpha}{\alpha}}} \int_{\R^d}\overline{\Phi}(C\|z\|^{\alpha})dz.$$
Let $\delta$ tend to $0$, we obtain the result.
\end{proof}
\textbf{Acknowledgements:} I would like to thank Jean-Marc Aza\"{i}s and Jos\'{e} Rafael Le\'{o}n for their invaluable discussions and unconditional support. I also thank two anonymous reviewers for their constructive remarks.
 
\end{document}